\begin{document} 
\newtheorem{prop}{Proposition}[section]
\newtheorem{Def}{Definition}[section] \newtheorem{theorem}{Theorem}[section]
\newtheorem{lemma}{Lemma}[section] \newtheorem{Cor}{Corollary}[section]

\title[3D Klein - Gordon - Schr\"odinger system]{\bf Low regularity 
well-posedness for the 3D Klein - Gordon - 
Schr\"odinger system}
\author[Hartmut Pecher]{
{\bf Hartmut Pecher}\\
Fachbereich Mathematik und Naturwissenschaften\\
Bergische Universit\"at Wuppertal\\
Gau{\ss}str.  20\\
42097 Wuppertal\\
Germany}
\email{\tt pecher@math.uni-wuppertal.de}
\date{}

\begin{abstract} 
The Klein-Gordon-Schr\"odinger system in 3D is shown to be locally well-posed 
for Schr\"odinger data in $H^s$ and wave data in $H^{\sigma}\times H^{\sigma 
-1}$ , if $ s > - \frac{1}{4},$  $\sigma > - \frac{1}{2}$ , $\sigma -2s > 
\frac{3}{2} $ and $\sigma -2 < s < \sigma +1$ . This result is optimal up to 
the endpoints in the sense that the local flow map is not $C^2$ otherwise. It is 
also shown that (unconditional) uniqueness holds for $s=\sigma=0$ in the natural 
solution space $C^0([0,T],L^2) \times C^0([0,T],L^2) \times 
C^0([0,T],H^{-\frac{1}{2}}).$ This solution exists even globally by 
Colliander, Holmer and Tzirakis \cite{CHT}. The proofs are based on new 
well-posedness results for the Zakharov system by Bejenaru, Herr, Holmer and 
Tataru \cite{BHHT}, and Bejenaru and Herr \cite{BH}.
\end{abstract}
\maketitle

\renewcommand{\thefootnote}{\fnsymbol{footnote}}
\footnotetext{\hspace{-1.8em}{\it 2000 Mathematics Subject Classification:} 
35Q55, 35L70 \\
{\it Key words and phrases:} Klein - Gordon - Schr\"odinger system, 
well-posedness  
, Fourier restriction norm method}

\normalsize 
\setcounter{section}{0}
\section{Introduction and main results}

We consider the (3+1)-dimensional Cauchy problem for the Klein - Gordon - 
Schr\"odinger system with Yukawa coupling
\begin{eqnarray}
\label{0.1}
i\partial_t u + \Delta u & = & nu
\\
\label{0.2}
\partial_t^2 n + (1-\Delta) n & = & |u|^2
\end{eqnarray}
with initial data
\begin{equation}
u(0)  =  u_0 \,  , \, n(0)  =  n_0 \, , \, \partial_t 
n(0) = n_1 \, ,
\label{0.3}
\end{equation}
where $u$ is a complex-valued and $n$ a real-valued function defined for $(x,t) 
\in {\mathbb R}^3 \times [0,T]$ .
This is a classical model which describes a system of scalar nucleons 
interacting with neutral scalar mesons. The nucleons are described by the 
complex scalar field $u$ and the mesons by the real scalar field $n$. The mass 
of the meson is normalized to be 1.

Our results do not use the energy conservation 
law but only charge conservation $ \|u(t)\|_{L^2({\mathbb R}^3)} \equiv const $ 
(for 
the global existence result), so they are equally true if one replaces $nu$ and 
$|u|^2$ by $-nu$ and/or $-|u|^2$ , respectively.

We are interested in local and global solutions for data
$$ u_0 \in H^s({\mathbb R}^3) \, , \, n_0 \in H^{\sigma}({\mathbb R}^3) \, , \, 
n_1 \in 
H^{\sigma -1}({\mathbb R}^3) $$
with minimal $s$ and $\sigma$ .

Local well-posedness for data $ u_0 \in L^2({\mathbb R}^3) \, , \, n_0 \in 
L^2({\bf 
R}^3) \, , \, n_1 \in H^{-1}({\mathbb R}^3) $ was shown by the author \cite{P} 
based on 
estimates given by Ginibre, Tsutsumi and Velo \cite{GTV} for the Zakharov 
system, more precisely these solutions exist uniquely in Bourgain type spaces 
which are subsets of the natural solution spaces
\begin{equation}
\label{*}
(u,n,\partial_t n) \in C^0([0,T],L^2({\mathbb R}^3)) \times C^0([0,T],L^2({\bf 
R}^3)) \times C^0([0,T],H^{-1}({\mathbb R}^3)) \, . 
\end{equation}
Colliander, Holmer and Tzirakis \cite{CHT} proved that under the same 
assumptions on the data global well-posedness holds in Strichartz-type spaces, 
especially (\ref{*}) holds for any $T > 0$.

Concerning the closely related wave Schr\"odinger system local well-posedness 
was shown for $ s > - \frac{1}{4} $ and $ \sigma > - \frac{1}{38} $ and also 
global well-posedness for certain $ s,\sigma < 0$ by T. Akahori 
\cite{A},\cite{A1}.

Thus three questions arise:
\\
(a)
Can we show local well-posedness for even rougher data ?
\\
(b) Is it possible to show the sharpness of this local well-posedness result 
?
\\
(c) Under which assumptions on the data can we show unconditional uniqueness, 
i.e. uniqueness in the natural solution space ?

Concerning (a) we prove that local well-posedness holds in Bourgain type spaces 
which are subsets of the natural spaces, provided the data fulfill 
$$ s > - \frac{1}{4} \, , \, \sigma >- \frac{1}{2} \, , \, \sigma -2s < 
\frac{3}{2} \, , \, \sigma -2 < s < \sigma +1 \, . $$
Especially the choice $ s = - \frac{1}{4} + $ and $ \sigma = - \frac{1}{2}+$ is 
possible, thus we relax the regularity assumptions for the Schr\"odinger and 
wave parameters by almost $\frac{1}{4}$ and $\frac{1}{2}$ order of derivatives, 
respectively.

Concerning (b) the estimates for the nonlinearities which lead to the local 
well-posedness result fail if $s < - \frac{1}{4} $ or $ \sigma < - \frac{1}{2}$ 
or $ \sigma -2s > \frac{3}{2} $. Using ideas of Holmer \cite{H} and Bejenaru, 
Herr, Holmer and Tataru \cite{BHHT} we can even show that the solution map
$(u_0,n_0,n_1) \mapsto (u(t),n(t),\partial_t n(t))$ is not $C^2$ in these 
cases, i.e. some type of ill-posedness holds.  

Concerning (c) we show that for data $ u_0 \in L^2({\mathbb R}^3) \, , \, n_0 
\in 
L^2({\mathbb R}^3) \, , \, n_1 \in H^{-1}({\mathbb R}^3) $ unconditional 
uniqueness 
holds in the space (\ref{*}). Using the global existence result of \cite{CHT} 
we get unconditional global well-posedness in this case.

The question of unconditional uniqueness was considered among others by Yi Zhou 
for the KdV equation \cite{Z} and nonlinear wave equations \cite{Z1}, by N. 
Masmoudi and K. Nakanishi for the Maxwell-Dirac, the 
Maxwell-Klein-Gordon equations \cite{MN}, the Klein-Gordon-Zakharov system and 
the Zakharov system \cite{MN1}, and by F. Planchon \cite{Pl} for semilinear wave 
equations.

The results in this paper are based on the (3+1)-dimensional estimates by 
Bejenaru and Herr \cite{BH} which they recently used to show a sharp 
well-posedness result for the Zakharov system. We also use the corresponding 
sharp (2+1)-dimensional local well-posedness results for the Zakharov system by 
Bejenaru, Herr, Holmer and Tataru \cite{BHHT}, especially their 
counterexamples.

We use the standard Bourgain spaces $X^{m,b}$ for the Schr\"odinger equation, 
which are defined as the completion of ${\mathcal S}({\mathbb R}^3 \times 
{\mathbb R})$ 
with respect to
$$ \|f\|_{X^{m,b}} := \| \langle \xi \rangle ^m \langle \tau + |\xi|^2 \rangle 
^b \widehat{f}(\xi,\tau)\|_{L^2_{\xi\tau}} \, . $$
Similarly $X^{m,b}_{\pm}$ for the equation $i\partial_t n_{\pm} \mp 
A^{1/2}n_{\pm} = 0$ is the completion of ${\mathcal S}({\mathbb R}^3 \times 
{\mathbb R})$ 
with respect to
$$ \|f\|_{X^{m,b}_{\pm}} := \| \langle \xi \rangle ^m \langle \tau \pm |\xi| 
\rangle ^b \widehat{f}(\xi,\tau)\|_{L^2_{\xi\tau}} \, . $$
For a given time interval $I$ we define $ \|f\|_{X^{m,b}(I)} := 
\inf_{\tilde{f}_{|I} = f} \|\tilde{f}\|_{X^{m,b}} $ and similarly $ 
\|f\|_{X^{m,b}_{\pm}(I)}$ . We often skip $I$ from the notation.

In the following we mean by a solution of a system of differential equation
always a solution of the corresponding system of integral equations.

Before formulating the main results of our paper we recall that the KGS system 
can be transformed into a first order (in t) system as follows: if
$$ (u,n,\partial_t n) \in C^0([0,T],H^s) \times  C^0([0,T],H^{\sigma}) \times  
C^0([0,T],H^{\sigma -1}) $$ is a solution of (\ref{0.1}),(\ref{0.2}),(\ref{0.3}) 
with data $(u_0,n_0,n_1) \in H^s \times H^{\sigma} \times H^{\sigma -1}$ ,then 
defining  $A:= - 
\Delta + 1$ and 
$$ n_{\pm} := n \pm iA^{-\frac{1}{2}} \partial_t n  $$ 
and 
$$ n_{\pm 0} := n_0 \pm iA^{-\frac{1}{2}}n_1 \in H^{\sigma} \, ,$$
 we get that
$$(u,n_+,n_-) \in C^0([0,T],H^s) \times  C^0([0,T],H^{\sigma}) \times  
C^0([0,T],H^{\sigma})$$
is a solution of the following problem:
\begin{eqnarray}
\label{0.1'}
i \partial_t u + \Delta u & = & \frac{1}{2}(n_+ - n_-)u \\
\label{0.2'}
i \partial_t n_{\pm} \mp A^{1/2} n_{\pm} & = & \pm A^{-1/2}(|u|^2) \\
\label{0.3'}
u(0) = u_0 & , & n_{\pm}(0) = n_{\pm 0} := n_0 \pm i A^{-1/2}n_1 \, . 
\end{eqnarray}
The corresponding system of integral equations reads as follows: 
\begin{eqnarray}
u(t) & = & e^{it\Delta} u_0 + \frac{1}{2} \int_0^t e^{i(t-\tau)\Delta} 
(n_+(\tau)+n_-(\tau))u(\tau) d\tau \\
\label{9}
n_{\pm}(t) & = & e^{\mp itA^{1/2}}n_{\pm 0} \pm i \int_0^t e^{\mp 
i(t-\tau)A^{1/2}} 
A^{-1/2}(|u(\tau)|^2) d\tau \, .
\end{eqnarray}

Conversely, if 
$$(u,n_+,n_-) \in X^{s,b}[0,T] \times X_+^{\sigma,b}[0,T]  \times 
X_-^{\sigma,b}[0,T]$$
is a solution of (\ref{0.1'}),(\ref{0.2'}) with data $u(0)=u_0 \in H^s$ and 
$n_{\pm}(0) = n_{\pm 0} \in H^{\sigma}$ , then we define
$n := \frac{1}{2}(n_+ + n_-)$ , $ 2i A^{-\frac{1}{2}}\partial_t n := n_+ - n_-$ 
and conclude that
$$(u,n,\partial_t n) \in X^{s,b}[0,T] \times (X_+^{\sigma,b}[0,T] + 
X_-^{\sigma,b}[0,T]) \times (X_+^{\sigma -1,b}[0,T] + X_-^{\sigma -1,b}[0,T]) $$
is a solution of (\ref{0.1}),(\ref{0.2}) with data 
$ u(0)=u_0 \in H^s$ and
 $$ n(0)=n_0= \frac{1}{2}(n_+(0)+n_-(0)) \in H^{\sigma} \, , \, \partial_t n(0) 
= \frac{1}{2i} A^{\frac{1}{2}}(n_+(0)-n_-(0)) \in H^{\sigma -1} \, . $$
If $(u,n_+,n_-) \in C^0([0,T],H^s) \times  C^0([0,T],H^{\sigma}) \times  
C^0([0,T],H^{\sigma}) $ , then we also have $(u,n,\partial_t n) \in 
C^0([0,T],H^s) \times  C^0([0,T],H^{\sigma}) \times  C^0([0,T],H^{\sigma -1})$ .

Our local well-posedness result reads as follow:
\begin{theorem}
\label{Theorem 1}
The Cauchy problem for the Klein - Gordon - Schr\"odinger system 
(\ref{0.1}),(\ref{0.2}),(\ref{0.3}) is locally well-posed for data 
$$ u_0 \in H^s({\mathbb R}^3) \, , \, n_0 \in H^{\sigma}({\mathbb R}^3) \, , \,  
n_1 
\in H^{\sigma -1}({\mathbb R}^3)$$
under the assumptions
$$ s > - \frac{1}{4} \, , \, \sigma > - \frac{1}{2} \, , \, \sigma -2s < 
\frac{3}{2} \, , \, \sigma -2 < s < \sigma +1 \, . $$
More precisley, there exists $ T > 0 $ , 
$T=T(\|u_0\|_{H^s},\|n_0\|_{H^{\sigma}},\|n_1\|_{H^{\sigma -1}})$ and a unique 
solution
$$ u \in X^{s,\frac{1}{2}+}[0,T] \, , $$
$$ n \in X_+^{\sigma,\frac{1}{2}+}[0,T] + 
X_-^{\sigma,\frac{1}{2}+}[0,T] \, , \, \partial_t n  \in X_+^{\sigma 
-1,\frac{1}{2}+}[0,T] + X_-^{\sigma -1,\frac{1}{2}+}[0,T] \, . $$
This solution has the property
$$ u \in C^0([0,T],H^s({\mathbb R}^3)) \, , \,  n \in C^0([0,T],H^{\sigma}({\bf 
R}^3)) \, , \, \partial_t n \in C^0([0,T],H^{\sigma -1}({\mathbb R}^3)) \, . $$
\end{theorem}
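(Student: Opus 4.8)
The plan is to reduce the problem to the equivalent first-order system (\ref{0.1'}), (\ref{0.2'}), (\ref{0.3'}) and solve the corresponding integral equations (10), (\ref{9}) by a contraction argument in the Bourgain spaces $X^{s,b}[0,T] \times X_+^{\sigma,b}[0,T] \times X_-^{\sigma,b}[0,T]$ with $b = \frac{1}{2}+$. Because of the equivalence established in the excerpt, it suffices to produce a unique fixed point of the map defined by the right-hand sides of (10), (\ref{9}); the continuity statement $(u,n,\partial_t n) \in C^0([0,T],H^s) \times C^0([0,T],H^\sigma) \times C^0([0,T],H^{\sigma-1})$ then follows from the standard embedding $X^{m,b}[0,T] \hookrightarrow C^0([0,T],H^m)$ for $b > \frac{1}{2}$, together with the back-transformation $n = \frac{1}{2}(n_+ + n_-)$, $\partial_t n = \frac{1}{2i} A^{1/2}(n_+ - n_-)$.

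The heart of the matter is a pair of bilinear estimates controlling the two nonlinearities. First I would establish the Schr\"odinger estimate
\begin{equation*}
\| n_{\pm} u \|_{X^{s,-\frac{1}{2}+}} \lesssim \| n_{\pm}\|_{X_{\pm}^{\sigma,\frac{1}{2}+}} \, \| u \|_{X^{s,\frac{1}{2}+}} \, ,
\end{equation*}
which governs the term $\frac{1}{2}(n_+ - n_-)u$ in (\ref{0.1'}); and second the wave estimate
\begin{equation*}
\big\| A^{-\frac{1}{2}}(u \overline{v}) \big\|_{X_{\pm}^{\sigma,-\frac{1}{2}+}} \lesssim \| u \|_{X^{s,\frac{1}{2}+}} \, \| v \|_{X^{s,\frac{1}{2}+}} \, ,
\end{equation*}
which governs the right-hand side $\pm A^{-1/2}(|u|^2)$ in (\ref{0.2'}). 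Together with the standard linear estimates for the free evolutions $e^{it\Delta}$, $e^{\mp itA^{1/2}}$ and the Duhamel term, and a time-localization lemma that gains a small positive power of $T$ from the gap between the $\frac{1}{2}+$ and $-\frac{1}{2}+$ exponents, these bilinear estimates close the contraction on a small interval $[0,T]$, with $T$ depending only on the data norms. Uniqueness in the Bourgain spaces and continuous dependence on the data are immediate byproducts of the contraction mapping principle.

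The main obstacle is proving the two bilinear estimates precisely in the claimed range $s > -\frac{1}{4}$, $\sigma > -\frac{1}{2}$, $\sigma - 2s < \frac{3}{2}$, $\sigma - 2 < s < \sigma + 1$, which is where the sharp $(3+1)$-dimensional multilinear technology of Bejenaru and Herr \cite{BH} enters. The plan is to pass to the frequency side, dyadically decompose both inputs and the output, and on each block exploit the transversality between the Schr\"odinger characteristic surface $\tau = -|\xi|^2$ and the wave cones $\tau = \mp|\xi|$. The resolution of the interaction is controlled by the convolution estimates on characteristic surfaces from \cite{BH}; the constraint $\sigma - 2s < \frac{3}{2}$ arises exactly from the high-output/high-high interaction where the $A^{-1/2}$ smoothing must absorb the derivative loss, while the conditions $\sigma - 2 < s < \sigma + 1$ control the high-low and low-high regimes where one factor is at much higher frequency than the other. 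The delicate point is that the standard bilinear Schr\"odinger-wave estimates of Ginibre-Tsutsumi-Velo type are insufficient to reach the endpoints; the improvement comes from using the refined surface-convolution bounds of \cite{BH} in place of the classical $X^{m,b}$ multiplier estimates, and organizing the dyadic summation so that the small power of $\langle \tau \pm |\xi|\rangle$ or $\langle \tau + |\xi|^2\rangle$ available from the $-\frac{1}{2}+$ exponent suffices to sum the geometric series in all frequency regimes.
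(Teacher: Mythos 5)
Your proposal is correct and follows essentially the same route as the paper: reduction to the first-order system, a contraction in $X^{s,\frac{1}{2}+}\times X_{\pm}^{\sigma,\frac{1}{2}+}$ with standard details deferred to Ginibre--Tsutsumi--Velo, and exactly the two bilinear estimates you state, proved by dyadic decomposition and the Bejenaru--Herr convolution estimates for singular measures, with the constraints $\sigma>-\frac{1}{2}$, $s>-\frac{1}{4}$, $\sigma-2s<\frac{3}{2}$, $\sigma-2<s<\sigma+1$ arising in precisely the frequency regimes you identify. The only cosmetic difference is that the paper proves the wave-part estimate with the slightly weaker $X^{s,\frac{1}{2}-}$ norms on the right-hand side (and allows general exponents $b_1,b_2,b$ in the Schr\"odinger-part estimate), extracting the power of $T$ by lowering the solution norms from $\frac{1}{2}+$ to $\frac{1}{2}-$ rather than solely from the Duhamel exponent gap as you do; both mechanisms are standard and equivalent for closing the contraction.
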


These conditions are sharp up to the endpoints, more precisely we get
\begin{theorem}
\label{Theorem 2}
Let $ u_0 \in H^s({\mathbb R}^3)$ ,  $ n_0 \in H^{\sigma}({\mathbb R}^3)$ , $ 
n_1 \in 
H^{\sigma -1}({\mathbb R}^3)$ . Then the flow map $ (u_0,n_0,n_1) \mapsto 
(u(t),n(t),\partial_t n(t)) $ , $ t \in [0,T] $ , does not belong to $C^2$ for 
any $ T > 0 $ , provided $ \sigma -2s-\frac{3}{2} > 0 $ or $ s < - \frac{1}{4} $ 
or $ 
\sigma < - \frac{1}{2} $ .
\end{theorem}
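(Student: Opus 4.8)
The plan is to argue by contraposition, using the standard fact that $C^{2}$ regularity of a map forces its second derivative at the origin to be a bounded bilinear form, together with the observation that for a system with purely bilinear nonlinearities this second derivative coincides with the second Picard iterate. Writing the data as $\varepsilon(u_{0},n_{0},n_{1})$ and expanding the solution in powers of $\varepsilon$, the coefficient of $\varepsilon^{2}$ is, in its Schr\"odinger component,
\[
u^{(2)}(t)=-i\int_{0}^{t}e^{i(t-\tau)\Delta}\big(n^{\mathrm{lin}}(\tau)\,u^{\mathrm{lin}}(\tau)\big)\,d\tau,
\]
and, in its wave component,
\[
n^{(2)}(t)=\int_{0}^{t}\frac{\sin\!\big((t-\tau)A^{1/2}\big)}{A^{1/2}}\,\big(|u^{\mathrm{lin}}(\tau)|^{2}\big)\,d\tau,
\]
where $u^{\mathrm{lin}}=e^{it\Delta}u_{0}$ and $n^{\mathrm{lin}}$ is the free Klein--Gordon evolution of $(n_{0},n_{1})$. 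If the flow map were $C^{2}$ near $0$, then $u^{(2)}$ would be bounded in $C^{0}H^{s}$ and $n^{(2)}$ in $C^{0}H^{\sigma}$ by the square of the data norm. It therefore suffices, in each of the three regimes, to exhibit data for which one of these estimates fails, i.e. for which the ratio of the relevant output norm to the squared data norm tends to infinity.

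Second, I would attach each threshold to one of the two bilinear expressions. The coupled condition $\sigma-2s>\tfrac32$ concerns only the generation of the wave field from the Schr\"odinger self-interaction, so I would test $n^{(2)}$ with pure Schr\"odinger data $(u_{0},0,0)$ concentrated at high frequency; this is the one genuinely two-parameter obstruction, in which both the output weight $N^{\sigma}$ and the input weight $N^{2s}$ appear. The floors $s<-\tfrac14$ and $\sigma<-\tfrac12$ are one-parameter and arise from configurations charging all the excess frequency to a single field: I would realise $s<-\tfrac14$ through a high-frequency Schr\"odinger input in $n^{(2)}$, and $\sigma<-\tfrac12$ through the mixed form $u^{(2)}$ tested on wave data $(0,n_{0},n_{1})$ concentrated at high frequency.

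For the explicit counterexamples I would take $\widehat{u_{0}}$, $\widehat{n_{0}}$, $\widehat{n_{1}}$ equal to indicators of thin (in general anisotropic) frequency boxes indexed by a large parameter $N$, positioned so that the interaction is resonant: the output space--time frequency lies within $O(1)$ of the characteristic set of the output equation---$\tau=-|\xi|^{2}$ for $u^{(2)}$ and $\tau=\pm|\xi|$ for $n^{(2)}$---on a set of substantial measure, on which the Duhamel integral grows coherently in time. As a model case, with $\widehat{u_{0}}$ the indicator of a unit cube centred at frequency $|\xi_{0}|\sim N$ and $(n_{0},n_{1})=0$, the product $|u^{\mathrm{lin}}|^{2}$ lives at low output frequency, the wave resonance holds only on a slab of thickness $\sim N^{-1}$, and one computes $\|n^{(2)}(1)\|_{H^{\sigma}}\sim N^{-1/2}$ against $\|u_{0}\|_{H^{s}}^{2}\sim N^{2s}$, so the ratio $\sim N^{-1/2-2s}$ diverges exactly when $s<-\tfrac14$. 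The other two thresholds are reached by analogous resonant constructions with high output frequency, so that the weight $N^{\sigma}$ and the gain from the factor $A^{-1/2}$ enter; tracking these weights, the measures of the supports, and the size of the resonant region, the ratio is comparable to a power of $N$ that is positive precisely when the corresponding inequality is violated and vanishes at the stated endpoints.

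The main obstacle I expect is the geometric design in the third step: one must position and shape the boxes---thin along the direction in which the resonance function varies fastest---so that the resonance relation holds on a set of maximal measure while the output modulation stays controlled, and then optimise the box dimensions to extract the sharp power of $N$ for each threshold. Care is also needed to confirm that the free inputs keep $H^{s}$- and $H^{\sigma}$-data norms of the expected size, that no cancellation destroys the coherent growth, and that the three constructions are genuinely independent. Since these resonant configurations are exactly the ones isolated by Bejenaru, Herr, Holmer and Tataru \cite{BHHT} and by Holmer \cite{H} for the closely related Zakharov and wave--Schr\"odinger systems, I would adapt their counterexamples rather than build them from scratch, transferring their analysis of the Zakharov wave operator to the Klein--Gordon operator $A^{1/2}=(1-\Delta)^{1/2}$.
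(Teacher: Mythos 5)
Your proposal follows essentially the same route as the paper: reduce $C^2$ regularity of the flow map to boundedness of the second Picard iterate (the Bourgain--Tzvetkov--Holmer argument), assign the threshold $\sigma-2s>\frac{3}{2}$ and the floor $s<-\frac{1}{4}$ to the wave Duhamel term with frequency-localized Schr\"odinger data (high and low output frequency, respectively, with your single-cube computation giving exactly the paper's ratio $N^{-2s-\frac{1}{2}}$ from Proposition \ref{Prop. 5}) and $\sigma<-\frac{1}{2}$ to the Schr\"odinger Duhamel term with wave data, and implement all three via resonant indicator-box data adapted from Holmer \cite{H} and Bejenaru--Herr--Holmer--Tataru \cite{BHHT}, which is precisely what the paper does in Propositions \ref{Prop. 4}, \ref{Prop. 5} and \ref{Prop. 6}. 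The only cosmetic difference is that the paper takes data supported on two boxes and checks that the non-resonant cross terms have Fourier support disjoint from the resonant one, a detail your coherence/cancellation caveat already anticipates.
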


The necessary estimates for the nonlinearities required in the local existence 
results are false if the assumptions regarding the smoothness of the data are 
violated. This is proven in section 4, Prop. \ref{Prop. 2} and Prop. \ref{Prop. 
3}. 

The unconditional uniqueness result is the following:
\begin{theorem}
\label{Theorem 3}
Let
$ u_0 \in L^2({\mathbb R}^3) \, , n_0 \in L^2({\mathbb R}^3) \, ,n_1 \in 
H^{-1}({\bf 
R}^3)$ be given. The Klein - Gordon - Schr\"odinger system 
(\ref{0.1}),(\ref{0.2}),(\ref{0.3}) is unconditionally globally well-posed, 
i.e. there exists a solution unique in
$$ (u,n,\partial_t n) \in C^0({\mathbb R}^+,L^2({\mathbb R}^3)) \times 
C^0({\mathbb 
R}^+,L^2({\mathbb R}^3)) \times C^0({\mathbb R}^+,H^{-1}({\mathbb R}^3)) \, . $$
\end{theorem}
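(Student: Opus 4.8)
The plan is to separate the two assertions. Existence in the natural class is already in hand: the pair $s=\sigma=0$ lies strictly inside the range of Theorem \ref{Theorem 1} (indeed $0>-\frac14$, $0>-\frac12$, $0<\frac32$ and $-2<0<1$), so there is a local solution in the Bourgain spaces $X^{0,\frac12+}\times X_+^{0,\frac12+}\times X_-^{0,\frac12+}$, which embeds into $C^0_tL^2$; charge conservation $\|u(t)\|_{L^2}\equiv\mathrm{const}$ together with the global result of \cite{CHT} extends this to all of $\mathbb R^+$. The whole content is therefore the \emph{unconditional} uniqueness: any triple $(u,n,\partial_t n)$ in the natural space solving the integral equations must coincide with this solution.

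Passing to the first order variables $n_\pm=n\pm iA^{-1/2}\partial_t n$ (with $A=1-\Delta$) turns the natural space into $(u,n_+,n_-)\in C^0([0,T],L^2)\times C^0([0,T],L^2)\times C^0([0,T],L^2)$, since $A^{-1/2}$ maps $H^{-1}$ to $L^2$. I would fix such a solution and show that it automatically belongs, on a sufficiently short interval $[0,\delta]$, to a Bourgain space $X^{0,b}\times X_+^{0,b}\times X_-^{0,b}$ with $b>\frac12$. Once this regularity is gained, the uniqueness already contained in Theorem \ref{Theorem 1} forces coincidence with the constructed solution on $[0,\delta]$, and a continuation argument based on the a priori control of the $L^2$ norms on each compact time interval propagates the identity to all of $\mathbb R^+$.

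The gain of regularity is the heart of the matter, and I would obtain it by a finite bootstrap in the modulation exponent $b$. The base is the embedding $C^0([0,\delta],L^2)\hookrightarrow X^{0,0}([0,\delta])$. Using the integral equations together with inhomogeneous (Duhamel) estimates for the Schr\"odinger and half-wave evolutions, and bilinear estimates for the nonlinearities $\frac12(n_+-n_-)u$ and $\pm A^{-1/2}(|u|^2)$ that \emph{gain} in the modulation variable --- schematically $\|(n_+-n_-)u\|_{X^{0,b-1}}\lesssim\|n_\pm\|_{X_\pm^{0,a}}\|u\|_{X^{0,a}}$ with $b>a$ --- one upgrades the solution from $X^{0,a}$ to $X^{0,b}$. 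Because $(s,\sigma)=(0,0)$ lies strictly inside the region of Theorem \ref{Theorem 1}, in particular strictly below the line $\sigma-2s=\frac32$ whose violation produces the ill-posedness of Theorem \ref{Theorem 2}, there is genuine slack in the estimates that can be traded for a fixed positive gain $\theta=b-a$ supplied by the Schr\"odinger--wave transversality structure of \cite{BH} and \cite{BHHT}; iterating a bounded number of steps $0\to\theta\to2\theta\to\cdots$ carries the exponent past $\frac12$.

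I expect the main obstacle to be precisely these gaining bilinear estimates at the $L^2$ endpoint with low modulation regularity on the factors: one must run the resonance analysis of \cite{BH},\cite{BHHT} with the input exponent $a$ ranging down to $0$ while still producing an output exponent strictly larger than $a$, and verify that the gain $\theta$ can be chosen uniformly over $a\in[0,\frac12]$. A related delicate point is the low-$b$ Duhamel/regularity step that underlies each iteration, which requires careful time-localization (and possibly auxiliary $L^1_tL^2_x$-type norms) rather than the standard estimate valid only for $b>\frac12$. The remaining ingredients --- the treatment of the difference of the two solutions in the continuation step and the harmless loss from the $\delta$-powers used to make the short-time pieces small --- are routine once the gaining estimates are in place.
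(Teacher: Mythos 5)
Your top-level architecture is the same as the paper's: pass to the first order variables $n_\pm = n \pm iA^{-1/2}\partial_t n$, upgrade an arbitrary natural-space solution to a Bourgain space covered by the uniqueness of Theorem \ref{Theorem 1}, and invoke \cite{CHT} for global existence. But the mechanism you propose for the regularity upgrade --- a bootstrap in the modulation exponent $b$ at \emph{fixed} spatial regularity $s=\sigma=0$, starting from $X^{0,0}$ --- has a genuine gap at its very first step, and you have in fact put your finger on it yourself without resolving it. The schematic estimate $\|(n_+-n_-)u\|_{X^{0,b-1}} \lesssim \|n_\pm\|_{X_\pm^{0,a}}\|u\|_{X^{0,a}}$ is false at $a=0$: $X^{0,0}=L^2_{xt}$ carries no modulation information whatsoever, so the product of two such factors is controlled only in $L^\infty_t L^1_x$ (via $C^0_tL^2_x$), and placing $L^1_x$ into a Sobolev space costs $\frac{3}{2}+$ spatial derivatives in dimension three --- a loss that no amount of slack in the interior of the well-posedness region at $(s,\sigma)=(0,0)$ can absorb, because that slack lives in the resonance analysis, which requires positive modulation weights on the inputs even to sum the dyadic pieces. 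So no uniform gain $\theta>0$ over $a\in[0,\frac12]$ exists, and your iteration cannot get off the ground: the loss of spatial derivatives at zero modulation regularity is unavoidable, not a technicality.

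The paper's proof (Prop.\ \ref{Prop. 1}) is built precisely around this obstruction, using an idea of Y.\ Zhou: instead of a bilinear estimate, it reads the modulation regularity directly off the equation, accepting the spatial loss. From $n_\pm u \in L^2((0,T),H^{-\frac32-})$ (pure H\"older/Sobolev, no Bourgain structure on the inputs) and the identity
$$ \|(i\partial_t+\Delta)u\|^2_{L^2((0,T),H^{-\frac32-})} + \|u\|^2_{L^2((0,T),H^{-\frac32-})} \sim \|u\|^2_{X^{-\frac32-,1}[0,T]} $$
one gets $u \in X^{-\frac32-,1}$ and similarly $n_\pm \in X_\pm^{-\frac12-,1}$, i.e.\ full modulation exponent $b=1$ at very negative spatial regularity. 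Interpolation with the trivial bounds $u\in X^{0,0}$, $n_\pm\in X_\pm^{0,0}$ then lands at $b=\frac12+$ with $s=-\frac34-$ and $\sigma=-\frac14-$. The subsequent iteration runs in the \emph{spatial} exponent of $u$ at fixed $b=\frac12+$ and fixed wave regularity: Theorem \ref{Theorem 4} with $b_1=\frac38+\epsilon$, $b_2=\frac12+\epsilon$, $b=\frac12$, $\sigma=-\frac14-$ yields $u\in X^{s,\frac58-}$, and interpolating back with $X^{0,0}$ at $\Theta=\frac45+$ improves $s$ by the factor $\frac45$, converging geometrically to $u\in X^{0-,\frac12+}$ --- so the bilinear estimate is only ever used with input modulation exponents at or above $\frac38$, never near $0$. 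Note also that the paper never upgrades the wave part beyond $X_\pm^{-\frac14-,\frac12+}$; it applies the uniqueness of Theorem \ref{Theorem 1} with $(s,\sigma)=(0-,-\frac14-)$, which lies inside the admissible region, rather than insisting on $\sigma=0$ as your plan does. To repair your write-up you would have to replace the $b$-bootstrap by an argument of this equation-based type (or something equivalent that pays the $\frac32+$ derivatives up front and recovers them afterwards); as stated, the uniform gaining estimate your scheme rests on does not hold.
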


We use the following notation. The Fourier transform is denoted by  $ \, 
\widehat{}$ or ${\mathcal F}$ and its inverse by $\, \check{} \,$ or ${\mathcal 
F}^{-1}$ , where it should be clear from the context, whether it is taken with 
respect to the space and time variables simultaneously or only with respect to 
the space variables. For real numbers $a$ we denote by $a+$ and $a-$ a number 
sufficiently close to $a$, but
larger and smaller than $a$, respectively. \\
{\bf Acknowlegment:} The author thanks the referee for many valuable suggestions 
which improved the paper.

\section{(Conditional) local well-posedness}
\begin{theorem}
\label{Theorem 4}
Assume 
$ \frac{1}{4} < b_1 \le \frac{1}{2} \, , \, b,b_2 \ge \frac{1}{2} \, , \, s > -1 
\, , $
\begin{eqnarray}
\label{10} 
\sigma & > & \frac{1}{2} -2b_1 \, , \\
\label{11} 
 s+\sigma &>& -2b_2 \, , \\
\label{12} 
s & < & \sigma + 2b_1 \, .
\end{eqnarray} 
If $ 0 < T \le 1 $ and $ u \in X^{s,b_2}[0,T] \, , \, v \in 
X^{\sigma,b}_{\pm}[0,T]$ we have
$$ \|uv\|_{X^{s,-b_1}[0,T]} \le c \|u\|_{X^{s,b_2}[0,T]} 
\|v\|_{X_{\pm}^{\sigma,b}[0,T]} \, . $$
\end{theorem}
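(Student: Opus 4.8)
The plan is to pass to the dual formulation and reduce everything to a multiplier bound on the Fourier side. Writing $X^{-s,b_1}$ for the dual of $X^{s,-b_1}$ under the $L^2_{x,t}$ pairing, it suffices to prove
$$ \Big|\int u\,v\,\overline w\,dx\,dt\Big| \le c\,\|u\|_{X^{s,b_2}}\|v\|_{X^{\sigma,b}_{\pm}}\|w\|_{X^{-s,b_1}} $$
for Schwartz $u,v,w$ on ${\mathbb R}^3\times{\mathbb R}$; the version on $[0,T]$ then follows from the definition of the restriction norms by extending and taking infima. Taking Fourier transforms turns the left-hand side into a convolution integral over $\{\xi=\xi_1+\xi_2,\ \tau=\tau_1+\tau_2\}$. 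Setting $\sigma_1:=\tau_1+|\xi_1|^2$, $\sigma_2:=\tau_2\pm|\xi_2|$, $\sigma_0:=\tau+|\xi|^2$ and introducing the normalized functions $f:=\langle\xi_1\rangle^{s}\langle\sigma_1\rangle^{b_2}\widehat u$, $g:=\langle\xi_2\rangle^{\sigma}\langle\sigma_2\rangle^{b}\widehat v$, $h:=\langle\xi\rangle^{-s}\langle\sigma_0\rangle^{b_1}\widehat w$ (so that $\|f\|_{L^2}=\|u\|_{X^{s,b_2}}$, and so on), the claim reduces to the trilinear bound
$$ \int K\,f\,g\,\overline h \ \le\ c\,\|f\|_{L^2}\|g\|_{L^2}\|h\|_{L^2},\qquad K=\frac{\langle\xi\rangle^{s}}{\langle\xi_1\rangle^{s}\langle\xi_2\rangle^{\sigma}}\cdot\frac{1}{\langle\sigma_0\rangle^{b_1}\langle\sigma_1\rangle^{b_2}\langle\sigma_2\rangle^{b}}. $$

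The decisive algebraic input is the resonance identity
$$ \sigma_0-\sigma_1-\sigma_2=|\xi_1+\xi_2|^2-|\xi_1|^2\mp|\xi_2|=2\,\xi_1\!\cdot\!\xi_2+|\xi_2|^2\mp|\xi_2|, $$
which forces $\max(\langle\sigma_0\rangle,\langle\sigma_1\rangle,\langle\sigma_2\rangle)\gtrsim\bigl|2\,\xi_1\!\cdot\!\xi_2+|\xi_2|^2\mp|\xi_2|\bigr|$. I would then run a dyadic decomposition in the three frequencies $|\xi|\sim N_0$, $|\xi_1|\sim N_1$, $|\xi_2|\sim N_2$ and the three modulations $\langle\sigma_j\rangle\sim L_j$, splitting according to which $L_j$ is largest and carrying out the usual case distinction (Schrödinger-high $N_0\sim N_1\gg N_2$, wave-high $N_0\sim N_2\gg N_1$, and high--high-to-low $N_1\sim N_2\gg N_0$). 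In each case one pays for the frequency weight $\langle\xi\rangle^{s}\langle\xi_1\rangle^{-s}\langle\xi_2\rangle^{-\sigma}$ by extracting a negative power of $\max_j L_j\gtrsim|H|$ from the corresponding modulation factor and closing the remaining bilinear expression with an $L^2_{x,t}$-product estimate for one free Schrödinger and one free half-wave factor. The three hypotheses are exactly the summability thresholds: condition \eqref{12} governs the wave-high regime, where the output modulation (weight $b_1$) is the largest and, since $|H|\sim N_0^2$ there, the gain $N_0^{-2b_1}$ must beat $N_0^{s-\sigma}$; condition \eqref{11} controls the high--high-to-low regime via $s+\sigma>-2b_2$; and \eqref{10} together with $\sigma>\tfrac12-2b_1$ handles the small-wave-frequency/transversality-limited region. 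The constraints $s>-1$ and $b_1>\tfrac14$ guarantee convergence of the resulting geometric dyadic series.

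The main obstacle will be the genuinely resonant region in which $2\,\xi_1\!\cdot\!\xi_2+|\xi_2|^2\mp|\xi_2|$ is small — geometrically, where the normal to the paraboloid $\{\sigma_1=0\}$ at $\xi_1$ is nearly parallel to the normal of the shifted cone $\{\sigma_2=0\}$ at $\xi_2$, so that the modulation weights yield essentially no gain. Here a crude application of Hölder or Strichartz inequalities is insufficient and one must instead invoke the transversality of the two characteristic surfaces, i.e. a genuine bilinear $L^2$ estimate of the Bejenaru--Herr type, whose natural exponent $\tfrac12$ is the source of the threshold $\tfrac12-2b_1$ in \eqref{10}. A secondary technical point is the endpoint $b=b_2=\tfrac12$: there the convolution in the modulation variables is only borderline convergent and produces a logarithmic factor, which I would absorb using the $\varepsilon$ of room left by the strict inequalities \eqref{10}--\eqref{12}, thereby avoiding any genuine endpoint loss.
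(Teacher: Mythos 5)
Your proposal is correct and follows essentially the same path as the paper's proof: dualize to a trilinear form, decompose dyadically in frequencies and modulations, run the same case distinction over the three high-frequency regimes (with conditions (\ref{10})--(\ref{12}) deployed in exactly the regimes you indicate, plus a soft H\"older/Sobolev argument when the wave frequency is $\lesssim 1$), and close the resonant interactions with the bilinear paraboloid--cone convolution estimates of Bejenaru--Herr \cite{BH} (their (3.24)--(3.28) and Prop.~3.8), whose exponent $\frac{1}{2}$ is indeed the source of the threshold in (\ref{10}). Your remark about the endpoint $b=b_2=\frac{1}{2}$ corresponds precisely to the paper's device of working with $\epsilon$-shifted exponents ($L^{b-}$, $N^{\sigma-}$, etc.) so that all dyadic sums remain geometric.
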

\noindent {\bf Remark:} A possible choice of the parameters is:
$$ b_1 = \frac{1}{2}- \, , \, b=b_2= \frac{1}{2} \, , \, \sigma > - \frac{1}{2} 
\, , \, s > - \frac{1}{2} \, , \, s < \sigma +1 \, . $$

Because we are going to use dyadic decompositions of $\widehat{u}$ and 
$\widehat{v}$ we take the notation from \cite{BH} and start by choosing a 
function $\psi \in C^{\infty}_0((-2,2))$ , which is even and  nonnegative with 
$\psi(r) = 1 $ for $|r|\le 1$. Defining $\psi_N(r) = \psi(\frac{r}{N}) - 
\psi(\frac{2r}{N}) $ for dyadic numbers $N=2^n \ge 2$ and $\psi_1 = \psi$ we 
have $ 1 = \sum_{N \ge 1} \psi_N $ . Thus $supp \, \psi_1 \subset[-2,2]$ and 
$supp \,
\psi_N \subset [-2N,-N/2] \cup [N/2,2N] $ for $N \ge 2$. For $f:{\mathbb R}^3 
\to 
{\mathbb C}$ we define the dyadic frequency localization operators $P_N$ by
$$ {\mathcal F}_x(P_N f)(\xi) = \psi_N(|\xi|) {\mathcal F}_x f(\xi) \, . $$
For $u:{\mathbb R}^3 \times {\mathbb R} \to {\mathbb C}$ we define the 
modulation 
localization operators
\begin{eqnarray*}
{\mathcal F}(S_Lu)(\tau,\xi) & = & \psi_L(\tau + |\xi|^2) {\mathcal 
F}u(\tau,\xi) \\
{\mathcal F}(W_L^{\pm} u)(\tau,\xi) & = & \psi_L(\tau \pm |\xi|) {\mathcal 
F}u(\tau,\xi)
\end{eqnarray*}
in the Schr\"odinger case and the wave case.
\begin{proof} [Proof of Theorem \ref{Theorem 4}]  Defining
$$ I(f,g_1,g_2) = \int f(\xi_1 - \xi_2,\tau_1 - \tau_2) g_1(\xi_1,\tau_1) 
g_2(\xi_2,\tau_2) d\xi_1 d\xi_2 d\tau_1 d\tau_2 $$
we have to show
$$|I(\widehat{v},\widehat{u}_1,\widehat{u}_2)| \lesssim 
\|u_1\|_{X^{-s,b_1}} 
\|u_2\|_{X^{s,b_2}}\|v\|_{X_{\pm}^{\sigma,b}} \, .
 $$
We use dyadic decompositions
$$ u_k = \sum_{N_k,L_k \ge 1} S_{L_k} P_{N_k} u_k \, , \, v = \sum_{N,L \ge 1} 
W^{\pm}_L P_N v \, . $$
Defining
$$ g_k^{L_k,N_k} = {\mathcal F} S_{L_k} P_{N_k} u_k \, , \, f^{L,N} = {\mathcal 
F} 
W^{\pm}_L P_N v $$
we have
$$ I(\widehat{v},\widehat{u}_1,\widehat{u}_2) = \sum_{N,N_1,N_2 \ge 1 , 
L,L_1,L_2 \ge 1} I(f^{L,N},g_1^{L_1,N_1},g_2^{L_2,N_2}) \, . $$
{\bf Case 1:} $ N_1 \sim N_2 \gtrsim N \gg 1 $ \\
{\bf a.} In the case $L,L_1,L_2 \le N_1^2$ we get by \cite[formula 
(3.24)]{BH}:
\begin{eqnarray*}
\lefteqn{ |I(f^{L,N},g_1^{L_1,N_1},g_2^{L_2,N_2})| } \\
& \lesssim & N_1^{-\frac{1}{2}+} L^{\frac{1}{2}} \|f^{L,N}\|_{L^2} 
L_1^{\frac{1}{2}} \|g_1^{L_1,N_1}\|_{L^2} L_2^{\frac{1}{2}} 
\|g_2^{L_2,N_2}\|_{L^2} \\
& \lesssim & A N_1^{-\frac{1}{2}+s+} N_2^{-s+} N^{-\sigma +} 
L_1^{\frac{1}{2}-b_1+} L_2^{\frac{1}{2}-b_2+} L^{\frac{1}{2}-b+} \\
& \lesssim & A N_1^{\frac{1}{2}-2b_1+} N^{-\sigma +} \\
& \lesssim & A \, ,
\end{eqnarray*}
because $b_1 > \frac{1}{4}$ and (\ref{10}). Here
$$A:= N^{\sigma -} L^{b-} \|f^{L,N}\|_{L^2}  N_1^{-s -} L_1^{b_1-} 
\|g_1^{L_1,N_1}\|_{L^2} N_2^{s -} L_2^{b_2-} \|g_2^{L_2,N_2}\|_{L^2} \, . $$
Dyadic summation over $L,L_1,L_2 \ge 1$ and $N,N_1,N_2$ gives
\begin{eqnarray*}
\lefteqn{ \sum_{1 \ll N \lesssim N_1 \sim N_2} \sum_{1 \le L,L_1,L_2 \le N_1^2} 
|I(f^{L,N},g_1^{L_1,N_1},g_2^{L_2,N2})| } \\
& \lesssim & \Bigl( \sum_{N,L} (N^{\sigma} L^b \|f^{L,N}\|_{L^2})^2 
\Bigr)^{\frac{1}{2}} \Bigl( \sum_{N_1,L_1} (N_1^{-s} L_1^{b_1} 
\|g_1^{L_1,N_1}\|_{L^2})^2 \Bigr)^{\frac{1}{2}} \\ 
& & \Bigl( \sum_{N_2,L_2} (N_2^s L_2^{b_2} \|g_2^{L_2,N_2}\|_{L^2})^2 
\Bigr)^{\frac{1}{2}} \\
& \lesssim & \|v\|_{X_{\pm}^{\sigma,b}} \|u_1\|_{X^{-s,b_1}} \|u_2\|_{X^{s,b_2}} 
\end{eqnarray*}
by almost orthogonality. \\
{\bf b.} Similarly in the case $N_1^2 < \max(L,L_1,L_2)$ we get by 
\cite[formula (3.25)]{BH}:
\begin{eqnarray*}
\lefteqn{ |I(f^{L,N},g_1^{L_1,N_1},g_2^{L_2,N_2})| } \\
& \lesssim & N_1^{-\frac{1}{2}} \frac{N_1}{\max(L,L_1,L_2)^{\frac{1}{2}}} 
L^{\frac{1}{2}} \|f^{L,N}\|_{L^2} L_1^{\frac{1}{2}} \|g_1^{L_1,N_1}\|_{L^2} 
L_2^{\frac{1}{2}} \|g_2^{L_2,N_2}\|_{L^2} \\
& \lesssim & A N_1^{-\frac{1}{2}+s+} N_2^{-s+} N^{-\sigma+} 
L_1^{\frac{1}{2}-b_1+} L_2^{\frac{1}{2}-b_2+} L^{\frac{1}{2}-b+} 
\frac{N_1}{\max(L,L_1,L_2)^{\frac{1}{2}}} \\
& \lesssim & A N_1^{\frac{1}{2}+} N^{-\sigma +} \frac{1}{\max(L,L_1,L_2)^{b_1 
-}} \\
& \lesssim & A N_1^{\frac{1}{2}-2b_1+} N^{-\sigma +} \\
& \lesssim & A
\end{eqnarray*}
as in case a. Dyadic summation gives the claimed estimate.
\\
{\bf Case 2:} $ N_1 \ll N_2 $ ($ \Longrightarrow \, N \sim N_2$) \\
{\bf a.} In the case $L_2 \ll N_2^2 $ we get by \cite[formula (3.26) and 
(3.28)]{BH} :
\\
$ \max(L,L_1,L_2) \gtrsim N_2^2 $
and
\begin{eqnarray*}
\lefteqn{ |I(f^{L,N},g_1^{L_1,N_1},g_2^{L_2,N_2})| } \\
& \lesssim & N_1 N_2^{-\frac{1}{2}} (L_1 L_2 L)^{\frac{1}{2}} 
\max(L,L_1,L_2)^{-\frac{1}{2}} \|f^{L,N}\|_{L^2} \|g_1^{L_1,N_1}\|_{L^2} 
\|g_2^{L_2,N_2}\|_{L^2} \\
& \lesssim & A N_1^{1+s+} N_2^{-\frac{1}{2}-s+} N^{-\sigma+} 
L_1^{\frac{1}{2}-b_1+} L_2^{\frac{1}{2}-b_2+} L^{\frac{1}{2}-b+} 
\max(L,L_1,L_2)^{-\frac{1}{2}} \\
& \lesssim & A N_2^{\frac{1}{2}-\sigma+} \max(L,L_1,L_2)^{-b_1+}\\
& \lesssim & A N_2^{\frac{1}{2}-2b_1-\sigma+} \\
& \lesssim & A 
\end{eqnarray*}
using $s > -1$ and (\ref{10}).
Dyadic summation gives the desired bound as in case 1.\\
{\bf b.} In the case $L_2 \gtrsim N_2^2$ we consider 3 subcases using the proof 
of 
\cite[Prop. 3.8]{BH}:  \\
{\bf b1.} $L \le L_1$ and $N_1^2 \le \max(L,L_1) = L_1 $. 
\begin{eqnarray*}
 |I(f^{L,N},g_1^{L_1,N_1},g_2^{L_2,N_2})| 
& \lesssim & N_1^{\frac{3}{2}} L^{\frac{1}{2}} \|f^{L,N}\|_{L^2} 
\|g_1^{L_1,N_1}\|_{L^2} 
\|g_2^{L_2,N_2}\|_{L^2} \\
& \lesssim & A N_1^{\frac{3}{2}+s+} N_2^{-s+} N^{-\sigma +} L_1^{-b_1 +} 
L_2^{-b_2 +} L^{\frac{1}{2}-b +} \\
& \lesssim & A N_1^{\frac{3}{2}+s+} N_2^{-s-\sigma +} L_1^{-b_1 +} L_2^{-b_2 +}  
\\
& \lesssim & A N_1^{\frac{3}{2}+s-2b_1 +} N_2^{-s-\sigma -2b_2 +}   \\
& \lesssim & A N_1^{\frac{3}{2}-\sigma -2b_1 -2b_2 +}    \\
& \lesssim & A
\end{eqnarray*}
using (\ref{11}),(\ref{10}) and $b_2 \ge \frac{1}{2}$ . Dyadic 
summation gives the claimed estimate.\\
{\bf b2.} $ L_1 < L $ and $ N_1^2 \le \max(L,L_1) = L $ . \\
We have
\begin{eqnarray*}
|I(f^{L,N},g_1^{L_1,N_1},g_2^{L_2,N_2})|   
& \lesssim  & N_1^{\frac{3}{2}} L_1^{\frac{1}{2}} \|f^{L,N}\|_{L^2} 
\|g_1^{L_1,N_1}\|_{L^2} \|g_2^{L_2,N_2}\|_{L^2} \\ 
& \lesssim & A N_1^{\frac{3}{2}+s+} N_2^{-s+} N^{-\sigma +} L_1^{\frac{1}{2}-b_1 
+} L_2^{-b_2 +} L^{-b+} \\ 
& \lesssim & A N_1^{\frac{3}{2}+s+} N_2^{-s-\sigma +} L^{\frac{1}{2}-b_1-b +} 
L_2^{-b_2 +} \\
& \lesssim & A N_1^{\frac{3}{2}+s+} N_2^{-s-\sigma +} N_1^{1-2b_1-2b +} 
N_2^{-2b_2 +} \\
& \lesssim & A N_1^{\frac{3}{2}+s+1-2b_1-2b+} N_2^{-s-\sigma -2b_2 +}  \\
& \lesssim & A N_1^{\frac{5}{2}-\sigma -2b_1 -2b_2 -2b+}   \\
& \lesssim & A
\end{eqnarray*} 
again using (\ref{11}),(\ref{10}) and $b,b_2 \ge \frac{1}{2}$.
\\
{\bf b3.} $ N_1^2 > \max(L,L_1) $ . \\
We have
\begin{eqnarray*}
|I(f^{L,N},g_1^{L_1,N_1},g_2^{L_2,N_2})|
& \lesssim  & N_1^{\frac{1}{2}} L^{\frac{1}{2}} L_1^{\frac{1}{2}} 
\|f^{L,N}\|_{L^2} 
\|g_1^{L_1,N_1}\|_{L^2} \|g_2^{L_2,N_2}\|_{L^2} \\ 
& \lesssim & A N_1^{\frac{1}{2}+s+} N_2^{-s+} N^{-\sigma +} 
L_1^{\frac{1}{2}-b_1+} L_2^{-b_2+} L^{\frac{1}{2}-b+} \\
& \lesssim & A
N_1^{\frac{1}{2}+s+} 
N_2^{-s-\sigma+} N_1^{1-2b_1+} N_2^{-2b_2+} \\
&  \lesssim & A N_1^{\frac{3}{2}-\sigma-2b_1-2b_2+}\\ 
& \lesssim & A 
\end{eqnarray*}
again using (\ref{11}),(\ref{10}) and $b_2 \ge \frac{1}{2}$ .\\
{\bf Case 3:} $ N_2 \ll N_1 $ ($\Longrightarrow$ $N \sim N_1$). \\
Interchanging the roles of $N_1$ and $N_2$ as well as $L_1$ and $L_2$ we 
consider different cases. \\
{\bf a.} $L_1 \ll N_1^2$. \\
We use \cite[formula (3.26) and (3.28)]{BH} and get $\max(L,L_1,L_2) \gtrsim 
N_1^2$ and
\begin{eqnarray*}
\lefteqn{ |I(f^{L,N},g_1^{L_1,N_1},g_2^{L_2,N_2})| } \\ 
 & \lesssim  &  N_2 N_1^{-\frac{1}{2}} (L_1 L_2 L)^{\frac{1}{2}} 
\max(L,L_1,L_2)^{-\frac{1}{2}} \|f^{L,N}\|_{L^2} \|g_1^{L_1,N_1}\|_{L^2} 
\|g_2^{L_2,N_2}\|_{L^2} \\
& \lesssim & A N_1^{s-\frac{1}{2}+} N_2^{-s+1+} N^{-\sigma +} 
L_1^{\frac{1}{2}-b_1+} L_2^{\frac{1}{2}-b_2+} L^{\frac{1}{2}-b+} 
\max(L,L_1,L_2)^{-\frac{1}{2}}\\
& \lesssim & A N_1^{s-\frac{1}{2}-\sigma +} N_2^{-s+1+} \max(L,L_1,L_2)^{-b_1+} 
\\
& \lesssim &  A N_1^{s-\frac{1}{2}-\sigma-2b_1 +} N_2^{-s+1+} \, .
 \end{eqnarray*} 
If $s\le 1$ we get the bound $  A N_1^{\frac{1}{2}-\sigma-2b_1} \lesssim A $ by 
(\ref{10}), whereas for
$s>1$ we estimate by $AN_1^{s-\frac{1}{2}-\sigma-2b_1 +} \lesssim 
AN_1^{-\frac{1}{2}} \lesssim A$ using (\ref{12}). 
Dyadic summation gives the claimed estimate.\\
{\bf b.} In the case $L_1 \gtrsim N_1^2$ we consider 3 subcases using the proof 
of \cite[Prop. 3.8]{BH}
{\bf b1.} $L \le L_2$ , $ N_2^2 \le \max(L,L_2) = L_2 $ . \\
We get
\begin{eqnarray*} 
|I(f^{L,N},g_1^{L_1,N_1},g_2^{L_2,N_2})|  
& \lesssim &  N_2^{\frac{3}{2}} L^{\frac{1}{2}} \|f^{L,N}\|_{L^2} 
\|g_1^{L_1,N_1}\|_{L^2} \|g_2^{L_2,N_2}\|_{L^2} 
\\
& \lesssim & A N_1^{s+} N_2^{-s+\frac{3}{2}+} N^{-\sigma+} L_1^{-b_1+} 
L_2^{-b_2+} L^{\frac{1}{2}-b+} \\
& \lesssim & A N_1^{s-\sigma-2b_1+} N_2^{-s+\frac{3}{2}-2b_2+} \, .
\end{eqnarray*} 
If $s < \frac{3}{2}-2b_2$ we get the bound $AN_1^{\frac{3}{2}-\sigma-2b_1-2b_2+} 
\lesssim AN_1^{1-2b_2} \lesssim A $ by (\ref{10}) and $b_2 \ge \frac{1}{2}$ , 
whereas in the case $ s \ge \frac{3}{2}-2b_2$ we estimate by 
$AN_1^{s-\sigma-2b_1+} \lesssim A$ using (\ref{12}).
\\
{\bf b2.} $L_2 < L$ , $N_2^2 \le \max(L,L_2) = L$ . \\
We get
\begin{eqnarray*} 
|I(f^{L,N},g_1^{L_1,N_1},g_2^{L_2,N_2})|  
& \lesssim &  N_2^{\frac{3}{2}} L_2^{\frac{1}{2}} \|f^{L,N}\|_{L^2} 
\|g_1^{L_1,N_1}\|_{L^2} \|g_2^{L_2,N_2}\|_{L^2} 
\\
& \lesssim & A N_1^{s+} N_2^{-s+\frac{3}{2}+} N^{-\sigma+} L_1^{-b_1+} 
L_2^{\frac{1}{2}-b_2+} L^{-b+} \\
& \lesssim & A N_1^{s-\sigma-2b_1+} N_2^{-s+\frac{3}{2}-2b+} \, .
\end{eqnarray*} 
If $s < \frac{3}{2}-2b$ we get the bound $AN_1^{\frac{3}{2}-\sigma-2b_1-2b+} 
\lesssim AN_1^{1-2b} \lesssim A $ by (\ref{10}) and $b \ge \frac{1}{2}$,  
whereas in the case $ s \ge \frac{3}{2}-2b$ we estimate by 
$AN_1^{s-\sigma-2b_1+} \lesssim A$ using (\ref{12}).
\\
{\bf b3.} $N_2^2 > \max(L,L_2)$ . \\
We get
\begin{eqnarray*} 
|I(f^{L,N},g_1^{L_1,N_1},g_2^{L_2,N_2})|  
& \lesssim &  N_2^{\frac{1}{2}} L^{\frac{1}{2}} L_2^{\frac{1}{2}} 
\|f^{L,N}\|_{L^2} 
\|g_1^{L_1,N_1}\|_{L^2} \|g_2^{L_2,N_2}\|_{L^2} 
\\
& \lesssim & A N_1^{s+} N_2^{-s+\frac{1}{2}+} N^{-\sigma+} L_1^{-b_1+} 
L_2^{\frac{1}{2}-b_2+} L^{\frac{1}{2}-b+} \\
& \lesssim & A N_1^{s-\sigma-2b_1+} N_2^{-s+\frac{1}{2}+} \, .
\end{eqnarray*} 
If $s \le \frac{1}{2}$ we get the bound $AN_1^{\frac{1}{2}-\sigma-2b_1+} 
\lesssim A $ by (\ref{10}) , 
whereas in the case $ s > \frac{1}{2}$ we estimate by 
$AN_1^{s-\sigma-2b_1+} \lesssim A$ using (\ref{12}), which gives the desired 
bound after dyadic summation.
\\
{\bf Case 4:} $N \lesssim 1$ \\
In this case we need no dyadic decomposition. We estimate directly using  $ 
\langle \xi_1 \rangle \sim \langle \xi_2 \rangle$ and $ \xi =\xi_1 - \xi_2 $ , 
$\tau = \tau_1 - \tau_2$ :
\begin{eqnarray*}
\lefteqn{\int_{\langle \xi \rangle \lesssim 1 , \langle \xi_1 \rangle \sim  
\langle \xi_2 \rangle } \widehat{v}(\xi,\tau) \widehat{u}_1(\xi_1,\tau_1) 
\widehat{u}_2(\xi_2,\tau_2)  d\xi_1 d\xi_2 d\tau_1 d\tau_2 } \\
& \lesssim & \int_{\langle \xi \rangle \lesssim 1 , \langle \xi_1 \rangle \sim  
\langle \xi_2 \rangle } \frac{\widehat{v}(\xi,\tau)}{\langle \xi \rangle 
^{\frac{3}{2}-\sigma+}} \frac{\widehat{u}_1(\xi_1,\tau_1)}{\langle \xi_1 
\rangle^{-s}}
 \frac{\widehat{u}_2(\xi_2,\tau_2)}{\langle \xi_2 \rangle^s}  d\xi_1 d\xi_2 
d\tau_1 d\tau_2 \\
& \lesssim & \|{\mathcal F}^{-1}(\frac{\widehat{v}(\xi,\tau)}{\langle \xi 
\rangle^{\frac{3}{2}-\sigma+}})\|_{L^3_t L^\infty_x}
 \|{\mathcal F}^{-1}(\frac{\widehat{u_1}(\xi_1,\tau_1)}{\langle \xi_1 
\rangle^{-s}})\|_{L^3_t L^2_x}\|{\mathcal 
F}^{-1}(\frac{\widehat{u_2}(\xi_2,\tau_2)}{\langle \xi_2 \rangle^s})\|_{L^3_t 
L^2_x} \\
&  \lesssim & \|{\mathcal F}^{-1}(\frac{\widehat{v}(\xi,\tau)}{\langle \xi 
\rangle^{-\sigma}})\|_{L^3_t L^2_x} \|u_1\|_{X^{-s,\frac{1}{6}+}} 
\|u_2\|_{X^{s,\frac{1}{6}+}} \\
& \lesssim & \|v\|_{X^{\sigma,\frac{1}{6}+}_{\pm}} \|u_1\|_{X^{-s,\frac{1}{6}+}} 
\|u_2\|_{X^{s,\frac{1}{6}+}} \, .
\end{eqnarray*}
by Sobolev's embedding theorem. This is more than enough for our claimed 
estimate and completes the proof of Theorem \ref{Theorem 4}.
\end{proof}

\begin{theorem}
\label{Theorem 5}
Assume $ s > - \frac{1}{4} \, , \, \sigma-2s < \frac{3}{2} \, , \, \sigma -2 < 
s \, . $ If $0 < T \le 1$ and $u_1,u_2 \in X^{s,\frac{1}{2}-}[0,T]$ the 
following 
estimate holds:
$$ \| A^{-1/2} (u_1 \overline{u}_2) \|_{X^{\sigma,-\frac{1}{2}+}_{\pm}[0,T]} 
\le c 
\|u_1\|_{X^{s,\frac{1}{2}-}[0,T]} \|u_2\|_{X^{s,\frac{1}{2}-}[0,T]} \, . $$
\end{theorem}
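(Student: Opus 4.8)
The plan is to dualize and reduce everything to the same trilinear convolution estimate that was used in the proof of Theorem \ref{Theorem 4}. Since $A^{-1/2}$ has Fourier symbol $\langle\xi\rangle^{-1}$, one has $\|A^{-1/2}(u_1\overline{u}_2)\|_{X^{\sigma,-\frac{1}{2}+}_{\pm}} = \|u_1\overline{u}_2\|_{X^{\sigma-1,-\frac{1}{2}+}_{\pm}}$, so the claim is equivalent to the bilinear estimate $\|u_1\overline{u}_2\|_{X^{\sigma-1,-\frac{1}{2}+}_{\pm}}\lesssim\|u_1\|_{X^{s,\frac{1}{2}-}}\|u_2\|_{X^{s,\frac{1}{2}-}}$. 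Dualizing the wave norm, whose dual is $X^{1-\sigma,\frac{1}{2}-}_{\pm}$, against a test function $v$, and using $\widehat{\overline{u}_2}(\eta,\mu)=\overline{\widehat{u}_2(-\eta,-\mu)}$, the task becomes to bound
$$|I(\overline{\widehat{v}},\widehat{u}_1,\overline{\widehat{u}_2})|\lesssim \|u_1\|_{X^{s,\frac{1}{2}-}}\|u_2\|_{X^{s,\frac{1}{2}-}}\|v\|_{X^{1-\sigma,\frac{1}{2}-}_{\pm}},$$
with $I$ exactly the form from the proof of Theorem \ref{Theorem 4} and $\xi=\xi_1-\xi_2$, $\tau=\tau_1-\tau_2$. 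Passing to absolute values, the conjugation affects neither the Sobolev weight $\langle\xi_2\rangle^s$ nor the modulation weight $\langle\tau_2+|\xi_2|^2\rangle^{\frac{1}{2}-}$, so this is a nonnegative trilinear estimate in which the wave slot carries the weight $\langle\xi\rangle^{1-\sigma}\langle\tau\pm|\xi|\rangle^{\frac{1}{2}-}$ while each Schr\"odinger slot carries $\langle\xi_i\rangle^s\langle\tau_i+|\xi_i|^2\rangle^{\frac{1}{2}-}$.

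The decisive structural point is that the underlying interaction is again Schr\"odinger$\times$Schr\"odinger$\to$wave, i.e.\ precisely the geometry of the Zakharov estimates of \cite{BH}. Hence, after the dyadic decomposition $u_k=\sum S_{L_k}P_{N_k}u_k$, $v=\sum W^{\pm}_L P_N v$, I can reuse verbatim the bilinear $L^2$ bounds \cite[formulas (3.24)--(3.28) and Prop.\ 3.8]{BH}; only the distribution of the weights differs from Theorem \ref{Theorem 4}, in that both Schr\"odinger factors now carry the same index $s$ and the wave factor carries $1-\sigma$ rather than $\sigma$. I would then run the same case distinction in $N,N_1,N_2$ and $L,L_1,L_2$. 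In Case 1, $N_1\sim N_2\gtrsim N\gg1$, the bilinear bound supplies the gain $N_1^{-\frac{1}{2}+}$, and after inserting the weights the surplus is $\lesssim N_1^{-\frac{1}{2}-2s+}N^{\sigma-1+}$, whose summability forces $s>-\frac{1}{4}$ (when $\sigma\le1$) and $\sigma-2s<\frac{3}{2}$ (when $\sigma>1$). In Cases 2 and 3, $N_1\ll N_2$ respectively $N_2\ll N_1$ so that $N\sim N_{\max}$, the low-modulation subcase again yields $\sigma-2s<\frac{3}{2}$, while the high-modulation subcases b1--b3, in which the large modulations $\gtrsim N_{\max}^2$ are spent through the weights, produce a surplus of the shape $N_{\min}^{\frac{1}{2}-s+}N_{\max}^{\sigma-2-s+}$, which closes for large $s$ under $\sigma-2<s$ and for small $s$ under $\sigma-2s<\frac{3}{2}$. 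Thus the three hypotheses arise exactly as they should.

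Case 4, $N\lesssim1$, needs separate comment. Here $\langle\xi\rangle\sim1$ renders all powers of the wave frequency harmless, so for $s\ge0$ the crude H\"older/Sobolev estimate $L^3_tL^\infty_x\cdot L^3_tL^2_x\cdot L^3_tL^2_x$ used in Theorem \ref{Theorem 4} suffices. The genuinely new difficulty is that for $-\frac{1}{4}<s<0$ the two equal-index Schr\"odinger weights no longer cancel as they did in Theorem \ref{Theorem 4}; instead $\langle\xi_1\rangle^{-s}\langle\xi_2\rangle^{-s}\sim\langle\xi_1\rangle^{-2s}$ grows, and I must again invoke the bilinear $L^2$ smoothing of \cite{BH} (the $N_1^{-\frac{1}{2}+}$ gain) to absorb this factor, which is possible precisely when $s>-\frac{1}{4}$. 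In every case the dyadic summation is finished by Schur/Cauchy--Schwarz as in Theorem \ref{Theorem 4}, the strictness of the hypotheses guaranteeing that all residual powers of $N$ and $L$ are negative, and the passage from the global estimate to the restricted $[0,T]$ version is the standard extension argument.

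I expect the main obstacle to be the high$\times$high$\to$low resonant interaction, namely Case 1 together with its $N\lesssim1$ shadow in Case 4, where the two high-frequency Schr\"odinger inputs must combine into a low-frequency wave. This is exactly the place where the sharp bilinear $L^2$ estimate of \cite{BH} is indispensable and where the thresholds $s=-\frac{1}{4}$ and $\sigma-2s=\frac{3}{2}$ are attained, so the whole result hinges on that gain surviving within the all-$\frac{1}{2}-$ modulation budget, which leaves only a $0+$ cushion for the summations.
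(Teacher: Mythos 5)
Your proposal follows essentially the same route as the paper: dualize against $v\in X^{1-\sigma,\frac{1}{2}-}_{\pm}$, decompose dyadically, and run the Bejenaru--Herr bilinear estimates through the same case analysis, with the surpluses $N_1^{-\frac{1}{2}-2s+}N^{\sigma-1+}$ and $N_{\min}^{\frac{1}{2}-s+}N_{\max}^{\sigma-2-s+}$ and the three hypotheses arising exactly where you predict. The one detail you leave implicit --- making the $N_1^{-\frac{1}{2}+}$ gain survive the all-$\frac{1}{2}-$ modulation budget in the low wave-frequency case --- is resolved in the paper by splitting $L_1^{\frac{1}{2}}\le L_1^{\frac{1}{4}}L_2^{\frac{1}{4}}$ (for $L_1\le L_2$) and interpolating the bilinear Strichartz bound \cite[Prop.\ 4.3]{BH} with the trilinear estimate \cite[formula (4.22)]{BH} to reach exponents $L^{\frac{1}{2}-}L_1^{\frac{1}{4}+}L_2^{\frac{1}{4}+}$, precisely the $0+$ cushion you anticipated.
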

\begin{proof}  We have to show
$$|I(\widehat{v},\widehat{u}_1,\widehat{u}_2)| \lesssim  
\|u_1\|_{X^{s,\frac{1}{2}-}} 
\|u_2\|_{X^{s,\frac{1}{2}-}}\|v\|_{X_{\pm}^{1-\sigma,\frac{1}{2}-}} \, .
 $$
Using dyadic decompositions as in the proof of Theorem \ref{Theorem 4} we 
consider different 
cases.\\
{\bf Case 1:} $N_1 \sim N_2 \gtrsim N \gg 1 $\\
{\bf a.} In the case $L,L_1,L_2 \le N_1^2$ we get by \cite[formula (3.24)]{BH} 
\begin{eqnarray*}
\lefteqn{ |I(f^{L,N},g_1^{L_1,N_1},g_2^{L_2,N_2})| } \\
& \lesssim & N_1^{-\frac{1}{2}+} L^{\frac{1}{2}} \|f^{L,N}\|_{L^2} 
L_1^{\frac{1}{2}} \|g_1^{L_1,N_1}\|_{L^2} L_2^{\frac{1}{2}} 
\|g_2^{L_2,N_2}\|_{L^2} \\
& \lesssim & B N_1^{-\frac{1}{2}-s+} N_2^{-s+} N^{\sigma -1 +} 
L_1^{0+} L_2^{0+} L^{0+} \\
& \lesssim & B N_1^{-\frac{1}{2}-2s+} N^{\sigma -1 +} \, ,
\end{eqnarray*}
where
$$B:= N^{1-\sigma -} L^{\frac{1}{2}-} \|f^{L,N}\|_{L^2}  N_1^{s -} 
L_1^{\frac{1}{2}-} 
\|g_1^{L_1,N_1}\|_{L^2} N_2^{s -} L_2^{\frac{1}{2}-} \|g_2^{L_2,N_2}\|_{L^2} \, 
. $$
If $\sigma \le 1$ we get the bound $ \lesssim B N_1^{-\frac{1}{2}-2s+} \lesssim 
B$ , because $s > -\frac{1}{4}$ , whereas in the case $ \sigma > 1$ we estimate 
by $ \lesssim B N_1^{-\frac{3}{2}-2s+\sigma+} \lesssim B$ , because $\sigma -2s 
< \frac{3}{2}$ .
Dyadic summation over $L,L_1,L_2 \ge 1$ and $N,N_1,N_2$ gives the claimed 
result.\\
{\bf b.} In the case $N_1^2 < \max(L,L_1,L_2)$ we get by \cite[formula 
(3.25)]{BH} the same estimate as in a. with $N_1^{-\frac{1}{2}+}$ replaced by 
$N_1^{-\frac{1}{2}} N_1 \max(L,L_1,L_2)^{-\frac{1}{2}} \le N_1^{-\frac{1}{2}}$ , 
so that the same argument applies.\\
{\bf Case 2:} $ N_1 \ll N_2 $ ($ \Longrightarrow \, N \sim N_2$) (or $N_2 \ll 
N_1$ , which is the same problem).\\
{\bf a.} In the case $L_2 \gtrsim N_2^2 $ we get by \cite[formula (3.27)]{BH} :
\begin{eqnarray*}
\lefteqn{ |I(f^{L,N},g_1^{L_1,N_1},g_2^{L_2,N_2})| } \\
& \lesssim & N_1^{\frac{1}{2}} \min(L,L_1)^{\frac{1}{2}} \min(N_1^2, 
\max(L,L_1))^{\frac{1}{2}} \|f^{L,N}\|_{L^2} \|g_1^{L_1,N_1}\|_{L^2} 
\|g_2^{L_2,N_2}\|_{L^2} \\
& \lesssim & B N_1^{\frac{1}{2}-s+} N_2^{-s+} N^{\sigma -1+} L^{-\frac{1}{2}+} 
L_1^{-\frac{1}{2}+} L_2^{-\frac{1}{2}+} \min(L,L_1)^{\frac{1}{2}} 
\min(N_1^2,\max(L,L_1))^{\frac{1}{2}}  
\end{eqnarray*}
{\bf a1.} $L,L_1 \le N_1^2 $ \\
In this case we can estimate this by
$$
\lesssim B N_1^{\frac{1}{2}-s+} N_2^{-s+} N^{\sigma -1+}
L_2^{-\frac{1}{2}+} 
 \lesssim  B N_1^{\frac{1}{2}-s+} N^{\sigma -2-s+} \, . $$
If $s \le \frac{1}{2}$ we get the bound $\lesssim B N^{-\frac{3}{2}-2s+\sigma+} 
\lesssim B $ , because $\sigma -2s < \frac{3}{2}$ , whereas in the case $s > 
\frac{1}{2} $ we estimate by $ \lesssim B N^{\sigma -2-s+} \lesssim B$ , because 
$\sigma -2 < s$ .
Dyadic summation gives the desired bound.\\
{\bf a2.} $ \max(L,L_1) > N_1^2 $ \\ 
We estimate in this case by
$$
\lesssim  B N_1^{\frac{3}{2}-s+} N_2^{-s+} N^{\sigma -1+} L_2^{-\frac{1}{2}+} 
\max(L,L_1)^{-\frac{1}{2}+} 
 \lesssim  B N_1^{\frac{1}{2}-s+} N^{\sigma -2-s+} \, , $$
the same bound as in a1.\\
{\bf b.} In the case $L_2 \ll N_2^2 $ we get by \cite[formula (3.26) and 
(3.28)]{BH} :
$ \max(L,L_1,L_2) \gtrsim N_2^2 $ and
\begin{eqnarray*}
\lefteqn{ |I(f^{L,N},g_1^{L_1,N_1},g_2^{L_2,N_2})| } \\
& \lesssim & N_1 N_2^{-\frac{1}{2}} (L_1 L_2 L)^{\frac{1}{2}} 
\max(L,L_1,L_2)^{-\frac{1}{2}}  \|f^{L,N}\|_{L^2} \|g_1^{L_1,N_1}\|_{L^2} 
\|g_2^{L_2,N_2}\|_{L^2} \\
& \lesssim & B N_1^{1-s+} N_2^{-\frac{1}{2}-s+} N^{\sigma -1+} (L_1 L_2 L)^{0+} 
\max(L;L_1,L_2)^{-\frac{1}{2}} \\
& \lesssim &  B N_1^{1-s+} N^{-\frac{1}{2} -s +\sigma -2+} \\  
& \lesssim &  B N_1^{\frac{1}{2}-s+} N^{\sigma -2-s+} \, , 
\end{eqnarray*}
the same bound as in a1.\\
{\bf Case 3:} $ N \lesssim 1 $ ($\Longrightarrow N_1 \sim N_2$ or $N,N_1,N_2 
\lesssim 
1$.) \\
Assuming without loss of generality $L_1 \le L_2$ and using the bilinear 
Strichartz type estimate \cite[Prop. 4.3]{BH} we get
\begin{eqnarray*}
\lefteqn{ |I(f^{L,N},g_1^{L_1,N_1},g_2^{L_2,N_2})| } \\
&& \le  \| f^{L,N} g_1^{L_1,N_1}\|_{L^2} \|g_2^{L_2,N_2}\|_{L^2} \\
&& \lesssim \min(N,N_1) N_1^{-\frac{1}{2}} L^{\frac{1}{2}} L_1^{\frac{1}{2}} 
\|f^{L,N}\|_{L^2} \|g_1^{L_1,N_1}\|_{L^2} \|g_2^{L_2,N_2}\|_{L^2} \\
&& \lesssim  N_1^{-\frac{1}{2}} L^{\frac{1}{2}}  \|f^{L,N}\|_{L^2} 
L_1^{\frac{1}{4}} \|g_1^{L_1,N_1}\|_{L^2} L_2^{\frac{1}{4}} 
\|g_2^{L_2,N_2}\|_{L^2} \, .
\end{eqnarray*}
Furthermore we get by \cite[formula (4.22)]{BH}
$$ |I(f^{L,N},g_1^{L_1,N_1},g_2^{L_2,N_2})| \lesssim L^{\frac{1}{3}} 
\|f^{L,N}\|_{L^2} L_1^{\frac{1}{3}} \|g_1^{L_1,N_1}\|_{L^2} L_2^{\frac{1}{3}} 
\|g_2^{L_2,N_2}\|_{L^2} \, , $$
so that by interpolation we arrive at
\begin{eqnarray*}
 |I(f^{L,N},g_1^{L_1,N_1},g_2^{L_2,N_2})| & \lesssim & N_1^{-\frac{1}{2}+} 
L^{\frac{1}{2}-} \|f^{L,N}\|_{L^2} L_1^{\frac{1}{4}+} \|g_1^{L_1,N_1}\|_{L^2} 
L_2^{\frac{1}{4}+} \|g_2^{L_2,N_2}\|_{L^2} \\
& \lesssim & B N_1^{-\frac{1}{2}-s+} N_2^{-s+} N^{\sigma -1+} \\
& \lesssim & BN_1^{-\frac{1}{4}-s+} N_2^{-\frac{1}{4}-s+} \\
& \lesssim & B  
\end{eqnarray*}
using $s > - \frac{1}{4}$ .
Dyadic summation in all cases completes the proof of 
Theorem \ref{Theorem 5}.
\end{proof} 
\begin{proof} [Proof of Theorem \ref{Theorem 1}] It is by now standard to use 
(the remark 
to) Theorem \ref{Theorem 4} and Theorem 
\ref{Theorem 5} to show the local well-posedness result (Theorem \ref{Theorem 
1}) for the 
system (\ref{0.1'}),(\ref{0.2'}),(\ref{0.3'}) as an application of the 
contraction mapping principle. For details 
of the method we refer to \cite{GTV}. This solution then immediately leads to a 
solution of the Klein-Gordon-Schr\"odinger system 
(\ref{0.1}),(\ref{0.2}),(\ref{0.3}) with the required properties as explained 
before Theorem \ref{Theorem 1}.

Moreover, if $(u,n,\partial_t n)$ is a solution of (the system of integral 
equations belonging to) (\ref{0.1}),(\ref{0.1}),(\ref{0.3}) with $u \in 
X^{s,\frac{1}{2}+}[0,T]$ and data $u_0 \in H^s$, $n_0 \in H^{\sigma}$, $n_1 \in 
H^{\sigma}$, then $n_{\pm}$ defined by (\ref{9}) belongs to 
$X_{\pm}^{\sigma,\frac{1}{2}+}[0,T]$ by Theorem \ref{Theorem 5} and thus $n= 
\frac{1}{2}(n_+ + n_-)$ belongs to $X_+^{\sigma,\frac{1}{2}+}[0,T] + 
X_-^{\sigma,\frac{1}{2}+}[0,T]$ and 
$\partial_t n = \frac{1}{2i} A^{\frac{1}{2}}(n_+ - n_-)$ belongs to $ 
X_+^{\sigma -1,\frac{1}{2}+,}[0,T] + X_-^{\sigma -1,\frac{1}{2}+}[0,T]$ , and
one easily checks that $(u,n_+,n_-)$ is a solution of the system (of integral 
equations belonging to) 
(\ref{0.1'}),(\ref{0.2'}),(\ref{0.3'}). But because this solution is uniquely 
determined the solution of the Klein - Gordon - Schr\"odinger system is also 
unique.
\end{proof} 

\section{Unconditional uniqueness}
In this section we show that solutions of the KGS system are unique in its 
natural solution spaces in the important case, where the Cauchy data for the 
Schr\"odinger part belong to $L^2$ and the data for the Klein-Gordon-part 
belong to $L^2 \times H^{-1}$, namely in the space $C^0([0,T],L^2)$ and 
$C^0([0,T],L^2) \times C^0([0,T],H^{-1})$, respectively. This is of particular 
interest, because we know that in this case the solution exists globally in 
time by the result of \cite{CHT}.

First we show
\begin{prop}

\label{Prop. 1}
Let $u_0 \in L^2({\mathbb R}^3)$ , $n_0 \in L^2({\mathbb R}^3)$ , $n_1 \in 
H^{-1}({\bf 
R}^3)$ and $T > 0$ be given. Any solution
$$ (u,n_+,n_-) \in C^0([0,T],L^2({\mathbb R}^3)) \times  C^0([0,T],L^2({\mathbb 
R}^3)) 
\times C^0([0,T],L^2({\mathbb R}^3)) $$
of the system (\ref{0.1'}),(\ref{0.2'}),(\ref{0.3'}) belongs to 
$$ X^{0-,\frac{1}{2}+}[0,T] \times  X_+^{-\frac{1}{4}-,\frac{1}{2}+}[0,T] 
\times X_-^{-\frac{1}{4}-,\frac{1}{2}+}[0,T] \, . $$
\end{prop}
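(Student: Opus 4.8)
The plan is to bootstrap the Bourgain regularity out of the bare continuity-in-time hypothesis by feeding the two Duhamel formulas (the integral versions of (\ref{0.1'}) and (\ref{0.2'})) back into themselves. I work throughout with the restriction spaces on $[0,T]$ and a fixed smooth temporal cut-off. Since $n_{\pm 0}=n_\pm(0)\in L^2$ by hypothesis, the homogeneous parts $e^{it\Delta}u_0$ and $e^{\mp itA^{1/2}}n_{\pm 0}$ already lie in $X^{0,\frac{1}{2}+}$ and $X_\pm^{0,\frac{1}{2}+}$, which is stronger than claimed, so the entire matter reduces to the two Duhamel integrals.

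The only information literally available is $u,n_\pm\in C^0([0,T],L^2)\subset L^2([0,T],L^2)=X^{0,0}$ (respectively $X_\pm^{0,0}$), with norms controlled by $T^{\frac{1}{2}}$ times the $C^0L^2$-norms. The crude step is then as follows: by H\"older and the Sobolev embedding $L^1\hookrightarrow H^{-\frac{3}{2}-}$ on ${\mathbb R}^3$ one has $n_\pm u\in C^0([0,T],H^{-\frac{3}{2}-})$ and, since $A^{-1/2}$ gains one derivative, $A^{-1/2}(|u|^2)\in C^0([0,T],H^{-\frac{1}{2}-})$. Because $C^0H^\rho\subset X^{\rho,0}\subset X^{\rho,-\frac{1}{2}+}$ on a finite interval, the standard inhomogeneous estimate $\|\int_0^t e^{i(t-\tau)\Delta}F\,d\tau\|_{X^{\rho,\frac{1}{2}+}}\lesssim\|F\|_{X^{\rho,-\frac{1}{2}+}}$ together with its wave analogue immediately upgrades the solution to
\[ u\in X^{-\frac{3}{2}-,\frac{1}{2}+}[0,T],\qquad n_\pm\in X_\pm^{-\frac{1}{2}-,\frac{1}{2}+}[0,T]. \]
The decisive gain is that the temporal weight $\tfrac{1}{2}+$ is reached in one pass; only the spatial indices are still below the targets.

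Now each factor is known both in $X^{0,0}$ (high spatial index, weight $0$) and in the space just obtained (low spatial index, weight $\tfrac{1}{2}+$), hence by interpolation in a whole family of $X^{s',b'}$ along the connecting segment. I then iterate the two Duhamel estimates, re-estimating the nonlinearities by a bilinear estimate in which the modulation weight $\tfrac{1}{2}+$ is placed on whichever factor already carries it while the output spatial index is pushed upward using the other factor's better spatial index; exploiting the transversality of the Schr\"odinger and wave characteristic surfaces (encoded in the dyadic bounds (3.24)--(3.28) and Prop. 3.8, 4.3 of \cite{BH}), each pass places the nonlinearity in $X^{\rho,-\frac{1}{2}+}$ with $\rho$ strictly larger than at the previous stage and hence, via the inhomogeneous estimate, raises the spatial index of $u$ and of $n_\pm$ while preserving $b=\tfrac{1}{2}+$. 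After finitely many passes $u$ reaches $0-$ and $n_\pm$ reaches $-\tfrac{1}{4}-$, at which point the sharp estimates of Theorem \ref{Theorem 4} (with $s=0-$, $\sigma=-\tfrac{1}{4}-$, $b_1=\tfrac{1}{2}-$, $b=b_2=\tfrac{1}{2}$, all of whose hypotheses are met) and of Theorem \ref{Theorem 5} close the bounds and yield the claimed memberships; the homogeneous parts, sitting at index $0$, never obstruct this.

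The main obstacle is precisely this bootstrapping loop. The sharp bilinear estimates are stated only at their endpoint spatial balance and with weight $\ge\tfrac{1}{2}$, so at the very negative intermediate indices produced by the crude step (for instance $s'=-\tfrac{3}{2}-<-1$, which violates the hypothesis $s>-1$ of Theorem \ref{Theorem 4}) they cannot be invoked directly. The real work is to extract from the dyadic estimates of \cite{BH} an off-endpoint version carrying a strictly positive spatial gain throughout the intermediate range, with the modulation weights distributed asymmetrically between the two factors, and then to verify that the two equations bootstrap compatibly so that the coupled iteration converges to exactly $(0-,-\tfrac{1}{4}-)$ rather than stalling. Secondary technical points are the careful treatment of the restriction-in-time norms $X^{m,b}(I)$, the time cut-offs in the inhomogeneous estimate, and the identification of the integral equations on $[0,T]$ with their cut-off extensions to all of ${\mathbb R}$.
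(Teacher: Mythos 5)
There is a genuine gap, and it sits exactly where you located ``the real work.'' Your crude step discards the information the paper's proof runs on: from $n_\pm u \in L^2((0,T),H^{-\frac{3}{2}-})$ the paper does \emph{not} pass through the Duhamel estimate at weight $b'=-\frac{1}{2}+$; it reads the temporal weight $b=1$ directly off the equation, via the elementary identity $\|u\|^2_{X^{-\frac{3}{2}-,1}[0,T]} \sim \|(i\partial_t+\Delta)u\|^2_{L^2((0,T),H^{-\frac{3}{2}-})} + \|u\|^2_{L^2((0,T),H^{-\frac{3}{2}-})}$ (Y.~Zhou's device), and likewise $n_\pm \in X_\pm^{-\frac{1}{2}-,1}[0,T]$. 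Interpolating these \emph{weight-one} bounds with $X^{0,0}$ (from the $C^0L^2$ hypothesis) yields the segments $u\in X^{-\frac{3}{2}\Theta-,\Theta}$, $n_\pm\in X_\pm^{-\frac{1}{2}\Theta-,\Theta}$; in particular $u \in X^{-\frac{3}{4}-\frac{3}{2}\epsilon-,\frac{1}{2}+\epsilon}$ (spatial index above $-1$, inside the range of Theorem \ref{Theorem 4}) and, decisively, $n_\pm \in X_\pm^{-\frac{1}{4}-,\frac{1}{2}+}$, which is already the asserted regularity for the wave part, so no iteration on $n_\pm$ is needed at all. Your version only produces the endpoint $(-\frac{3}{2}-,\frac{1}{2}+)$ for $u$ and $(-\frac{1}{2}-,\frac{1}{2}+)$ for $n_\pm$, and interpolation with $X^{0,0}$ then gives a strictly weaker family (weight $\frac{1}{2}+$ only at the bad spatial indices). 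This is fatal: hypothesis (\ref{10}) of Theorem \ref{Theorem 4} reads $\sigma > \frac{1}{2}-2b_1$ with $b_1\le\frac{1}{2}$, hence requires $\sigma > -\frac{1}{2}$, so your wave factor at $\sigma=-\frac{1}{2}-$ can never be fed into the bilinear estimate; and Theorem \ref{Theorem 5} cannot improve $n_\pm$ either, since it needs $s>-\frac{1}{4}$ while your $u$ still sits at $-\frac{3}{2}-$. The loop stalls at the first pass.

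Moreover the iteration mechanism you sketch --- a spatial gain at fixed weight $b=\frac{1}{2}+$, to be obtained from an ``off-endpoint version of the \cite{BH} estimates with strictly positive spatial gain'' --- is both left unproven and unnecessary; note that Theorem \ref{Theorem 4} has the same spatial index $s$ on input and output, so it supplies no such smoothing. The paper's loop gains \emph{modulation}, not spatial, regularity: with $n_\pm \in X_\pm^{-\frac{1}{4}-,\frac{1}{2}+}$ held fixed, Theorem \ref{Theorem 4} is applied with $b_1=\frac{3}{8}+\epsilon$ (legitimate, since only $b_1>\frac{1}{4}$ is required --- this sub-$\frac{1}{2}$ flexibility in $b_1$ is precisely what powers the bootstrap, and it is where $\sigma=-\frac{1}{4}-$ enters through (\ref{10})), placing $n_\pm u$ in $X^{s,-\frac{3}{8}-\epsilon}$ for $-\frac{3}{4}-\frac{3}{2}\epsilon- \le s<0$; Duhamel then gives $u\in X^{s,\frac{5}{8}-2\epsilon}$ at the \emph{same} spatial index, and interpolation with $X^{0,0}$ at $\Theta=\frac{4}{5}+$ converts the surplus temporal weight into the spatial gain $s\mapsto \frac{4}{5}s$, driving $s\to 0-$ geometrically. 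So the concrete missing idea in your proposal is the extraction of $b=1$ directly from the equation: without it your interpolation family never satisfies the hypotheses of Theorem \ref{Theorem 4}, and the replacement bilinear estimates you would need are neither supplied by you nor available at the stated indices.
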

\begin{proof} [Proof of Theorem \ref{Theorem 3}] We again remark that any 
solution
$$ (u,n,\partial_t n) \in C^0([0,T],L^2({\mathbb R}^3)) \times  
C^0([0,T],L^2({\mathbb R}^3)) 
\times C^0([0,T],H^{-1}({\mathbb R}^3)) $$ of the Klein-Gordon-Schr\"odinger 
system (\ref{0.1}),(\ref{0.2}),(\ref{0.3}) leads to a corresponding solution of 
the system (\ref{0.1'}),(\ref{0.2'}),(\ref{0.3'}) with $$ (u,n_+,n_-) \in 
C^0([0,T],L^2({\mathbb R}^3)) \times  C^0([0,T],L^2({\mathbb 
R}^3)) 
\times C^0([0,T],L^2({\mathbb R}^3)) \, .$$ 
Thus combining Prop. \ref{Prop. 1} with the local well-posedness result Theorem 
\ref{Theorem 1} and 
the global existence result of \cite{CHT} we immediately get Theorem 
\ref{Theorem 3}.
\end{proof}
\begin{proof} [Proof of Prop. \ref{Prop. 1}] For the first part we use an idea 
of 
Y. Zhou 
\cite{Z},\cite{Z1}. By Sobolev's embedding theorem we get
\begin{eqnarray*}
\| n_{\pm} u\|_{L^2((0,T),H^{-\frac{3}{2}-})} & \lesssim & \|n_{\pm} u 
\|_{L^2((0,T),L^1)} \\
& \lesssim & T^{\frac{1}{2}} \|n_{\pm}\|_{L^{\infty}((0,T);L^2)} 
\|u\|_{L^{\infty}((0,T),L^2)} < \infty \, . 
\end{eqnarray*}
so that from (\ref{0.1'}) we have $ u \in X^{-\frac{3}{2}-,1}[0,T]$ , because
$$ \|(i\partial_t + \Delta)u\|^2_{L^2((0,T,H^{-\frac{3}{2}-})} + 
\|u\|^2_{L^2((0,T),H^{-\frac{3}{2}-})} \sim \|u\|^2_{X^{-\frac{3}{2}-,1}[0,T]} 
< \infty \, . $$
Similarly we get
$$ \| |u|^2 \|_{L^2((0,T),H^{-\frac{3}{2}-})} \lesssim T^{\frac{1}{2}} 
\|u\|^2_{L^{\infty}((0,T),L^2)} < \infty $$
and therefore $ A^{-1/2} (|u|^2) \in L^2((0,T),H^{-\frac{1}{2}-}) $ . From 
(\ref{0.2'}) we conclude $n_{\pm} \in X_{\pm}^{-\frac{1}{2}-,1}[0,T]$ .\\
Interpolation with $u \in X^{0,0}[0,T]$ and $ n_{\pm} \in X_{\pm}^{0,0}[0,T]$ 
gives 
$ u \in X^{-\frac{3}{2}\Theta -, \Theta}[0,T],$  especially $ u \in 
X^{-\frac{3}{4}-\frac{3}{2}\epsilon-,\frac{1}{2}+\epsilon}[0,T] $ , 
and 
$ n_{\pm} \in X_{\pm}^{-\frac{1}{2}\Theta -,\Theta}[0,T]$ , especially $n_{\pm} 
\in X_{\pm}^{-\frac{1}{4}-,\frac{1}{2}+}[0,T]$ 
for any $ 0 \le \Theta \le 1 $ . \\
We now improve the regularity of $u$ keeping the regularity of $n$ fixed. We 
suppose $ u \in X^{-\frac{3}{4} (\frac{4}{5})^n - , \frac{1}{2}+}$ and $ 
n_{\pm} \in X_{\pm}^{-\frac{1}{4}-,\frac{1}{2}+} $ , which is fulfilled for 
$n=0$ , and want to conclude $ u \in 
X^{-\frac{3}{4}(\frac{4}{5})^n-,\frac{5}{8}-} $ for all $n \in {\mathbb N} \cup 
\{0\}$ . Assuming this for the moment we interpolate this result with $ u \in 
X^{0,0}$ with interpolation parameter $\Theta = \frac{4}{5}+$ and conclude $ u 
\in X^{-\frac{3}{4}(\frac{4}{5})^{n+1}-,\frac{1}{2}+}$ , so that the iteration 
works and finally gives $ u \in X^{0-,\frac{1}{2}+}[0,T] $ . Recalling $n_{\pm} 
\in X_{\pm}^{-\frac{1}{4}-,\frac{1}{2}+}[0,T] $ the uniqueness part of our 
local well-posedness result Theorem \ref{Theorem 1} gives the claimed result. 
So we are done if we prove
\begin{equation}
\label{**}
u \in X^{s,\frac{1}{2}+\epsilon} \, , \, n_{\pm} \in 
X_{\pm}^{-\frac{1}{4}-,\frac{1}{2}+} \quad \Longrightarrow \quad u \in 
X^{s,\frac{5}{8}-2\epsilon}
\end{equation}
for $ -\frac{3}{4}-\frac{3}{2}\epsilon - \le s < 0 $ and any $\epsilon > 0$ .\\
This means that we have to show
$$ \|u n_{\pm} \|_{X^{s,-\frac{3}{8}-\epsilon}} \lesssim 
\|u\|_{X^{s,\frac{1}{2}+\epsilon}} 
\|n_{\pm}\|_{X_{\pm}^{-\frac{1}{4}-,\frac{1}{2}+}} \, . $$
This is a consequence of Theorem \ref{Theorem 4}, where we choose the parameters 
as follows:\\
$b_1 = \frac{3}{8}+\epsilon$ , $b_2 = \frac{1}{2}+\epsilon$ , $ b = \frac{1}{2}$ 
, $\sigma = -\frac{1}{4}-$ .\\
Then one easily checks that the conditions (\ref{10}),(\ref{11}) and (\ref{12}) 
are satisfied, provided $ s \ge -\frac{3}{4}-\frac{3}{2}\epsilon -$ .

This completes the proof of Prop. 
\ref{Prop. 1}, and thus Theorem \ref{Theorem 3} is also proven.
\end{proof}

\section{ Sharpness of the well-posedness result}
In this section we show that the local well-posedness result is sharp up to the 
endpoints. First we construct counterexamples which show that the threshold 
on the parameters $s$ and $\sigma$ in Theorem \ref{Theorem 4} and 
Theorem \ref{Theorem 5} is essentially necessary. We follow the arguments of 
\cite{H} and \cite{BHHT}.
\begin{prop}
\label{Prop. 2}
Assume $ s \in {\mathbb R}$ , $b',b_1,b_2 \ge 0$ and $ {\sigma} < - \frac{1}{2}$ 
. Then 
the 
inequality
$$ \|uv\|_{X^{s,-b'}} \lesssim \|v\|_{X^{{\sigma},b_1}_{\pm}} \|u\|_{X^{s,b_2}} 
$$
is false.
\end{prop}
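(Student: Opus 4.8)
The plan is to disprove the bilinear estimate by exhibiting explicit counterexample sequences for $u$ and $v$ that concentrate frequency and modulation in the worst possible configuration, following the scheme of Holmer \cite{H} and Bejenaru, Herr, Holmer and Tataru \cite{BHHT}. The key observation is that the product $uv$ in physical space corresponds to convolution on the Fourier side, so the output frequency is $\xi = \xi_1 + \xi_2$ (with $\xi_1$ the Schr\"odinger frequency and $\xi_2$ the wave frequency). To exploit the failure at $\sigma < -\frac12$, I would drive the \emph{wave} variable $v$ to very high frequency $N$ while keeping the \emph{Schr\"odinger} variable $u$ at low (say bounded) frequency, so that the output $uv$ inherits frequency $\sim N$ on the right-hand side but is penalized by the factor $\langle\xi_2\rangle^{\sigma}\sim N^{\sigma}$ in $\|v\|_{X^{\sigma,b_1}_\pm}$, which is a \emph{gain} of negative order precisely when $\sigma<0$.

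Concretely, I would take $\widehat{v}$ to be a normalized indicator of a box in $(\xi_2,\tau_2)$-space centered on the wave characteristic surface $\tau_2 = \mp|\xi_2|$, with $|\xi_2|\sim N$ and of spatial width $\sim 1$ in each coordinate direction and temporal width matching modulation $\sim 1$, so that $\|v\|_{X^{\sigma,b_1}_\pm}\sim N^{\sigma}$. I would take $\widehat{u}_1$ (the dual test function, of regularity $-s$ and modulation $b_1$) supported on a comparable box around the \emph{same} high frequency $\xi\sim N$ on the Schr\"odinger surface $\tau=-|\xi|^2$, and $\widehat{u}_2$ supported near the \emph{low} frequency $\xi_2=\xi-\xi_1$ of size $O(1)$. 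The measure-theoretic point is that the convolution constraint $\xi_1-\xi_2=\xi$ (the argument of $\widehat v$) can be satisfied on a set of positive measure of size independent of $N$, because a low-frequency shift $\xi_2$ moves the high frequency $\xi_1\sim N$ without leaving its box. Evaluating $I(\widehat v,\widehat u_1,\widehat u_2)$ on these profiles gives a lower bound $\gtrsim N^{0}$ for the trilinear form, while the product of the three norms on the right is $\sim N^{\sigma}\cdot N^{-s}\cdot N^{0}\cdot N^{s}=N^{\sigma}$ after the frequency weights are tallied; since $\sigma<-\frac12<0$ this tends to $0$ as $N\to\infty$, contradicting any fixed constant $c$ in the inequality.

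The step I expect to require the most care is the precise bookkeeping of the modulation and frequency weights so that the high-frequency gain survives and is governed purely by $\sigma$, independently of $b',b_1,b_2\ge 0$. In particular I must verify that on the chosen supports all three modulation variables $\tau_1+|\xi_1|^2$, $\tau_2\pm|\xi_2|$, and the output modulation $\tau+|\xi|^2$ (with $\tau=\tau_1-\tau_2$) remain $O(1)$, so that none of the factors $\langle\cdot\rangle^{b'},\langle\cdot\rangle^{b_1},\langle\cdot\rangle^{b_2}$ produces any compensating growth; this is exactly the resonance condition that makes the Schr\"odinger--wave interaction worst. The remaining verification—that the integration region has $N$-independent measure and that the normalizations yield $\|u\|=\|v\|=1$—is a routine volume computation, so the essential content is the correct placement of the three supports relative to the two characteristic surfaces together with the convolution constraint.
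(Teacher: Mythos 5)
Your plan is of the right genre --- box-supported counterexamples with all three modulations of size $O(1)$, in the style of Holmer and \cite{BHHT} --- but it fails at the decisive step, namely your claim that in your frequency configuration (dual Schr\"odinger factor $u_1$ at frequency $\sim N$ on the parabola, wave factor $v$ at frequency $\sim N$ on the cone, input $u_2$ at frequency $O(1)$) all three modulations can simultaneously be $O(1)$. They cannot: with the paper's trilinear form, where $\widehat v$ is evaluated at $(\xi_1-\xi_2,\tau_1-\tau_2)$, the modulations obey the resonance identity
\begin{equation*}
\bigl(\tau_1+|\xi_1|^2\bigr)-\bigl(\tau_2+|\xi_2|^2\bigr)-\bigl((\tau_1-\tau_2)\pm|\xi_1-\xi_2|\bigr)
=|\xi_1|^2-|\xi_2|^2\mp|\xi_1-\xi_2|,
\end{equation*}
and in your configuration the right-hand side is $\sim N^2$, so at least one modulation is $\gtrsim N^2$ on the support (concretely: if $\tau_1\approx-N^2$ on the parabola and $\tau_1-\tau_2\approx\mp|\xi_1-\xi_2|\sim N$ on the cone, then $\tau_2\approx -N^2$ while $|\xi_2|=O(1)$, so $u_2$ has modulation $\sim N^2$; otherwise the supports are incompatible and the trilinear form vanishes). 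Since the proposition must refute the inequality for \emph{every} fixed $b',b_1,b_2\ge 0$, the factor carrying the large modulation inflates its norm by $N^{2b}$ and your ratio no longer blows up. An independent sanity check flags the same error: your computation yields blow-up at rate $N^{-\sigma}$ for every $\sigma<0$, with no constraint involving the $b$'s, which would contradict Theorem~\ref{Theorem 4} (see its remark), which \emph{proves} the estimate for all $\sigma>-\frac12$ with $b_1=\frac12-$, $b=b_2=\frac12$. There are also smaller slips: a unit-sized box on the parabola at frequency $N$ cannot have $O(1)$ Schr\"odinger modulation (the radial thickness must be $\sim N^{-1}$), and with $u_2$ at frequency $O(1)$ there is no $N^{s}$ factor to cancel the $N^{-s}$ from $u_1$, so even your weight tally $N^\sigma\cdot N^{-s}\cdot N^0\cdot N^s=N^\sigma$ is not right.

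The correct configuration --- which is exactly that of \cite[Prop.~6.1]{BHHT}, to which the paper's proof simply defers with the remark that the two-dimensional argument carries over to three dimensions --- puts \emph{both} Schr\"odinger factors at high, nearly antipodal frequencies and the wave factor at frequency $\sim 2N$: take $\widehat u_1=\chi_A$ with $A$ a box of dimensions $N^{-1}\times 1\times 1$ (in $\xi$) $\times\, 1$ (in $\tau$) around $\xi_1\approx Ne_1$, $\tau_1\approx-|\xi_1|^2$, and $\widehat u_2=\chi_B$ similarly around $\xi_2\approx(-N\pm1)e_1$, the $O(1)$ shift tuned so that $|\xi_1|^2-|\xi_2|^2\mp|\xi_1-\xi_2|=O(1)$ (e.g.\ $N^2-(N-1)^2-(2N-1)=0$), with $\widehat v$ a characteristic function covering $A-B$. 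Then all three modulations really are $O(1)$, $|A|\sim|B|\sim N^{-1}$, the trilinear form is $\sim N^{-2}$, the norms are $\sim N^{-s-\frac12}$, $N^{s-\frac12}$, $N^{\sigma-\frac12}$, and the ratio is $\sim N^{-\sigma-\frac12}$, which blows up precisely when $\sigma<-\frac12$ --- recovering the sharp threshold, which your construction, tellingly, does not produce.
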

\begin{proof} \cite[Prop. 6.1]{BHHT}. The two dimensional argument 
carries over 
to three dimensions.
\end{proof}
\begin{prop}
\label{Prop. 3}

(a) Assume ${\sigma},s \in {\mathbb R}$ , $b',b_1,b_2 \ge 0$ and $ 
{\sigma}-2s-\frac{3}{2} > 
0$. 
Then the inequality
$$ \|A^{-1/2}(u\overline{w})\|_{X_{\pm}^{{\sigma},-b'}} \lesssim 
\|u\|_{X^{s,b_1}} 
\|w\|_{X^{s,b_2}} $$
is false.\\
(b) The same holds true, if $ s < - \frac{1}{4}$ .

\end{prop}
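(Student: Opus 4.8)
The plan is to refute each inequality by producing, for arbitrarily large dyadic $N$, explicit functions $u,w$ for which the quotient of the two sides grows like a positive power of $N$; sending $N\to\infty$ then contradicts the claimed bound for every fixed $b',b_1,b_2\ge 0$, thereby showing that the hypotheses of Theorem \ref{Theorem 5} are sharp. Following \cite{H} and \cite{BHHT}, I take $\widehat u,\widehat w$ to be characteristic functions of thin neighbourhoods of the Schr\"odinger paraboloid $\{\tau=-|\xi|^2\}$ localized at spatial frequency $\sim N$, arranged so that the convolution $\mathcal F(u\overline w)=\widehat u\ast\widehat{\overline w}$ (which lives near $\tau\approx|\xi_2|^2-|\xi_1|^2$ with $\xi=\xi_1-\xi_2$) concentrates in an $O(1)$-neighbourhood of the light cone $\{\tau=\mp|\xi|\}$. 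I bound the output norm from below by integrating only over that neighbourhood, where $\langle\tau\pm|\xi|\rangle\sim 1$. Because all three modulations are kept $\sim 1$, the weights $\langle\tau_i+|\xi_i|^2\rangle^{b_i}$ and $\langle\tau\pm|\xi|\rangle^{-b'}$ are harmless, which is why the counterexamples are uniform in $b',b_1,b_2$.

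For part (a) I use the full resonant family. Let $R_1,R_2$ be the thickened shells $\{\,|\xi_1|\in[N,N+1],\ |\tau_1+|\xi_1|^2|\le 1\,\}$ and $\{\,|\xi_2|\in[N-1,N],\ |\tau_2+|\xi_2|^2|\le 1\,\}$, so that $|R_i|\sim N^2$ and $\|u\|_{X^{s,b_1}}\sim\|w\|_{X^{s,b_2}}\sim N^{s+1}$, giving a right-hand side $\sim N^{2s+2}$. Two points with $|\xi_1|\approx N$, $|\xi_2|\approx N-c$ produce $\tau\approx|\xi_2|^2-|\xi_1|^2\approx -2Nc$, so choosing the radius difference $c$ in terms of the output frequency $M=|\xi|$ puts the difference frequency on the cone; the shells supply exactly the resonant pairs. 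A geometric count (intersection of two spheres of radius $\sim N$ whose centres are $M$ apart, cut down to modulation $O(1)$) gives $\mathcal F(u\overline w)(\xi,\tau)\sim N/M$ on the relevant cone region. Inserting this into the weighted output norm yields
$$\|A^{-1/2}(u\overline w)\|_{X^{\sigma,-b'}_{\pm}}^2\ \gtrsim\ \int_{1}^{N}M^{2(\sigma-1)}\Big(\frac{N}{M}\Big)^2 M^2\,dM\ \sim\ N^{2}\!\int_1^N M^{2\sigma-2}\,dM,$$
so the quotient is $\sim N^{\sigma-2s-3/2}$ (dominated by $M\sim N$ when $\sigma>\tfrac12$ and by $M\sim1$ otherwise), which tends to $\infty$ precisely when $\sigma-2s>\tfrac32$.

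For part (b) the sharp example is instead the high-high-to-low interaction, concentrated much more tightly. I take $R_1,R_2$ to be paraboloid-neighbourhoods centred at $Ne_1$ (with $e_1$ a unit vector) of spatial extent $\sim 1/N$ in the $e_1$-direction and $\sim 1$ in the two orthogonal directions, with modulation $\sim 1$; writing $\xi_i=Ne_1+(a_i,b_i)$ one has $\tau\approx 2N(a_2-a_1)+|b_2|^2-|b_1|^2$, and the $1/N$ thinness in $e_1$ forces $|\tau|\lesssim 1$ while $|\xi|=|(a_1-a_2,b_1-b_2)|\lesssim 1$, so the output sits near the cone at frequency $O(1)$ and $\sigma$ drops out ($\langle\xi\rangle^{\sigma-1}\sim1$). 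Here $|R_i|\sim N^{-1}$, hence $\|u\|_{X^{s,b_1}}\sim\|w\|_{X^{s,b_2}}\sim N^{s-1/2}$, while $\mathcal F(u\overline w)\sim N^{-1}$ on an output set of measure $\sim N^{-1}$, so $\|A^{-1/2}(u\overline w)\|_{X^{\sigma,-b'}_{\pm}}\sim N^{-3/2}$. The quotient is $\sim N^{-3/2}/N^{2s-1}=N^{-1/2-2s}$, which blows up exactly when $s<-\tfrac14$.

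The main obstacle is the geometry underlying part (a): one has to choose the shape of $R_1,R_2$ so that the product $u\overline w$ stays within an $O(1)$-neighbourhood of the cone for \emph{every} pair of frequencies in the supports --- this is governed by the resonance function $\Psi=-|\xi_1|^2+|\xi_2|^2\pm|\xi_1-\xi_2|$, whose gradient has size $\sim N$ --- while keeping the input modulations $\sim 1$, and then to verify that the powers of $N$ assemble to the sharp exponent $\sigma-2s-\tfrac32$. A single product box is too crude (it yields only the weaker threshold $\sigma-2s>2$); the sharp constant requires the full family of resonant wave packets tiling the intersection of the two paraboloids with the cone, which is precisely the overlap estimate $\mathcal F(u\overline w)\sim N/M$ above and the content of the $2$D computation in \cite{BHHT} transplanted to $3$D. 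Part (b), by contrast, is the elementary low-output degeneration recorded above.
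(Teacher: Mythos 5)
Your part (b) is, up to cosmetic offsets, exactly the paper's Lemma~\ref{Lemma 1}: characteristic functions of $N^{-1}\times 1\times 1\times 1$ boxes along the paraboloid at spatial frequency $\approx Ne_1$ with modulation $\sim 1$, so that the product sits at frequency $O(1)$ with $\langle \tau \pm |\xi|\rangle \sim 1$, giving $\|u_N\|_{X^{s,b_1}} \sim N^{s-1/2}$, output norm $\gtrsim N^{-3/2}$, and quotient $\sim N^{-2s-1/2}$; your placement of \emph{both} boxes near $+Ne_1$ is in fact the correct reading of the paper's construction (the set $F$ as printed sits near $-Ne_1$, which would put the product at frequency $\sim 2N$, an evident sign slip). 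For part (a) you take a genuinely different route: the paper disposes of (a) by citing \cite[Prop.~6.2]{BHHT} (``replace $\sigma$ by $\sigma+2$''), which is a single concentrated example, whereas you build a self-contained three-dimensional family out of spherical shells and integrate over all output frequencies $M\in[1,N]$ on the cone. The shell idea is sound, it yields the sharp exponent, and making the 3D case explicit rather than by citation is a reasonable gain.

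Two concrete corrections, neither fatal. First, the overlap bound $\mathcal F(u\overline w)\sim N/M$ is wrong for the shells as you chose them ($|\xi_1|\in[N,N+1]$, $|\xi_2|\in[N-1,N]$, adjacent and disjoint). In bipolar coordinates $u=|\xi_1|$, $v=|\xi_1-\xi|$ one has $d\xi_1=(2\pi uv/M)\,du\,dv$, and the resonance condition $u^2-v^2=M+O(1)$ is a strip $u-v\approx M/2N$ of width $\sim 1/N$; but the constraints $u\ge N$, $v\le N$ then confine $u$ to an interval of length $\sim M/2N$ only, so the $(u,v)$-measure is $\sim M/N^2$ and the overlap is $\sim (N^2/M)\cdot(M/N^2)\sim 1$, uniformly in $1\lesssim M\lesssim N$ (your $N/M$ would be correct for coinciding shells, e.g.\ both equal to $[N,N+1]$). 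This actually streamlines your computation: $\|A^{-1/2}(u\overline w)\|^2 \gtrsim \int_1^N M^{2\sigma}\,dM \sim N^{2\sigma+1}$ for $\sigma>-\frac12$, hence quotient $\sim N^{\sigma-2s-3/2}$ in one stroke; the regime $\sigma\le -\frac12$ is still covered since the hypothesis then forces $s<-1$. Accordingly, ``tends to $\infty$ precisely when $\sigma-2s>\frac32$'' should be weakened to the implication you actually need. Second, your closing claim that a single product box is ``too crude'' and yields only $\sigma-2s>2$ is false: the antipodal pair $u$ at $Ne_1$ and $w$ at $-(N+1)e_1$ (boxes $N^{-1}\times1\times1$, modulation $\sim1$, mirrored for the other sign of $\pm$) puts the product at frequency $\approx (2N+1)e_1$ with $\tau\approx (N+1)^2-N^2=|\xi|+O(1)$, value $\sim N^{-1}$ on a set of measure $\sim N^{-1}$, giving $N^{\sigma-5/2}/N^{2s-1}=N^{\sigma-2s-3/2}$ --- and this single-box example is essentially the \cite{BHHT} construction the paper invokes for (a). So your shells are a valid alternative, but the stated motivation for preferring them over a single wave packet is incorrect.
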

\begin{proof} 
(a) \cite[Prop. 6.2]{BHHT}. Replace ${\sigma}$ by ${\sigma}+2$  and use their 
two-dimensional argument.\\
(b) follows immediately from the following
\renewcommand{\qedsymbol}{}

\end{proof}
\begin{lemma}
\label{Lemma 1}
Assume ${\sigma},s \in {\mathbb R}$ and $b',b_1,b_2 \ge 0$. For any $N \gg 1$ 
there 
exist 
functions $u_N$ and $w_N$ and a constant $c_0 > 0$ independent of $N$ such that
$$ \frac{ \|A^{-1/2}(u_N 
\overline{w}_N)\|_{X_{\pm}^{{\sigma},-b'}}}{\|u_N\|_{X^{s,b_1}} 
\|w_N\|_{X^{s,b_2}}} 
\ge c_0 N^{-2s-\frac{1}{2}} \, . $$
\end{lemma}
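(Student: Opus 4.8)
The plan is to disprove the estimate by an explicit high-frequency $\times$ high-frequency $\to$ low-frequency counterexample, in the spirit of \cite{H} and \cite{BHHT}. Since the claimed ratio $N^{-2s-\frac12}$ contains no $\sigma$ or $b'$ dependence, the construction must force the output $u_N\overline{w_N}$ to live at spatial frequency $|\xi|\sim 1$ and at distance $\lesssim 1$ from the wave characteristic surface $\tau\pm|\xi|=0$; then the weights $\langle\xi\rangle^\sigma$, $\langle\xi\rangle^{-1}$ (from $A^{-1/2}$) and $\langle\tau\pm|\xi|\rangle^{-b'}$ are all comparable to $1$ on the relevant region and drop out, up to constants depending on $\sigma,b'$ but not on $N$. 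Thus the whole problem reduces to an $L^2$ estimate for the product.

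Concretely, I would fix the radial direction $e_1=(1,0,0)$, center both inputs at $\xi_0=Ne_1$ on the Schr\"odinger paraboloid, and introduce the anisotropic box
\[
R=\Big\{(\xi,\tau): |\xi^{(1)}-N|\le \tfrac1N,\ |\xi^{(2)}|\le 1,\ |\xi^{(3)}|\le 1,\ |\tau+N^2|\le C_0\Big\}
\]
for a suitable absolute constant $C_0$, and set $\widehat{u_N}=\widehat{w_N}=\mathbf 1_R$. The thinness $\tfrac1N$ in the radial direction is the crucial geometric feature. On $R$ one has $\langle\xi\rangle\sim N$ and $|\tau+|\xi|^2|\lesssim 1$, so $\langle\tau+|\xi|^2\rangle\sim1$, and the input norms are immediate: $\|u_N\|_{X^{s,b_1}}\sim N^s|R|^{1/2}$ and $\|w_N\|_{X^{s,b_2}}\sim N^s|R|^{1/2}$, with $|R|\sim N^{-1}$, hence each factor is $\sim N^{s-\frac12}$.

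For the numerator I would analyze $\widehat{u_N\overline{w_N}}=\mathbf 1_R*\mathbf 1_{-R}$. Because both boxes sit on the paraboloid at the common center $\xi_0$, the convolution is concentrated near the origin: writing $\xi_j=\xi_0+a_j$, the output is $\xi=a_1-a_2$ and $\tau=-|\xi_1|^2+|\xi_2|^2=-2\xi_0\cdot(a_1-a_2)+(|a_2|^2-|a_1|^2)$, where the radial thinness forces $|\xi_0\cdot(a_1-a_2)|=N|a_1^{(1)}-a_2^{(1)}|\lesssim 1$ while the unit perpendicular widths keep $|a_j|^2\lesssim1$; hence $|\xi|\lesssim1$ and $|\tau|\lesssim1$. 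On this bounded region all the $X^{\sigma,-b'}_\pm$ weights are $\sim1$, so $\|A^{-1/2}(u_N\overline{w_N})\|_{X^{\sigma,-b'}_\pm}\gtrsim \|\mathbf 1_R*\mathbf 1_{-R}\|_{L^2}$. The $L^2$ norm of the convolution I would pin down from above by Young's inequality, $\|\mathbf 1_R*\mathbf 1_{-R}\|_{L^2}\le \|\mathbf 1_R\|_{L^1}\|\mathbf 1_R\|_{L^2}=|R|^{3/2}$, and from below by the overlap bound $\mathbf 1_R*\mathbf 1_{-R}(y)=|R\cap(R+y)|\ge 2^{-4}|R|$ on the central box of half side-lengths, a set of measure $\sim|R|$, giving $\|\mathbf 1_R*\mathbf 1_{-R}\|_{L^2}\gtrsim |R|^{3/2}\sim N^{-3/2}$.

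Combining, the ratio is $\gtrsim N^{-3/2}/(N^{s-\frac12})^2=N^{-2s-\frac12}$, which is the assertion, with $c_0$ depending only on $\sigma,b',s$. I expect the only genuine obstacle to be the geometric bookkeeping in the third step, namely verifying that the product stays at frequency $\lesssim1$ and modulation $\lesssim1$ uniformly in $N$; this is exactly what the $\tfrac1N$ radial thickness, matched against the paraboloid's radial gradient $2\xi_0$, is engineered to guarantee, and it is what renders the $\sigma$- and $b'$-weights harmless. Finally, Proposition \ref{Prop. 3}(b) follows at once: for $s<-\tfrac14$ one has $-2s-\tfrac12>0$, so the ratio tends to $\infty$ with $N$ and no estimate with an $N$-independent constant can hold.
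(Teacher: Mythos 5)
Your construction is correct and essentially the same as the paper's: the paper likewise takes characteristic functions of $N^{-1}\times 1\times 1\times 1$ boxes sitting on the paraboloid at spatial frequency $\sim N e_1$ (two slightly offset copies rather than your single box $R$), so that $u_N\overline{w}_N$ concentrates at frequency and modulation $O(1)$ where the $\sigma$-, $b'$- and $A^{-1/2}$-weights trivialize, and the same volume count ($N^{-3/2}$ against $(N^{s-\frac12})^2$) yields the ratio $N^{-2s-\frac12}$. Your explicit overlap bound $\mathbf{1}_R*\mathbf{1}_{-R}(y)=|R\cap(R+y)|\gtrsim |R|$ on a set of measure $\sim|R|$ is simply a self-contained justification of the paper's assertion that $\widehat{u_N\overline{w}_N}\gtrsim N^{-1}$ on a box $G$ of measure $\sim N^{-1}$.
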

\begin{proof} Let $\widehat{u}_N = \chi_E $ (= characteristic function 
of the 
set $E$), where
\begin{eqnarray*}
E & = & \{ (\xi_1,\xi_2,\xi_3,\tau) \in {\mathbb R}^4 : N - \frac{1}{N} \le 
\xi_1 
\le N + \frac{1}{N} , -1 \le \xi_2 , \xi_3 \le 1 , \\
& & \hspace{15em} -N^2-1 \le \tau \le -N^2+1 \} \, , 
\end{eqnarray*}
Moreover let
$\widehat{w}_N = \chi_F $ , where
\begin{eqnarray*}
F & = & \{ - N + \frac{3}{N} \le \xi_1 \le -N + \frac{5}{N} , -1 \le \xi_2 , 
\xi_3 \le 1 , \\
& & \hspace{5em} -(N-\frac{4}{N})^2-1 \le \tau \le -(N-\frac{4}{N})^2+1 \} \, , 
\end{eqnarray*}
so that $\langle \tau + |\xi|^2 \rangle \sim 1 $ and $\langle \xi \rangle \sim 
N$ on $E$ and $F$. This implies $ \widehat{u_N \overline{w}_N}(\xi,\tau) \gtrsim 
N^{-1}$ on a rectangle $G$ centered around points $\xi_1^0 = 0(\frac{1}{N})$ , 
$ \xi_2^0 = \xi_3^0 = 0$ , $\tau^0 = 0(1)$ of width $0(\frac{1}{N}) , 0(1), 
0(1)$ and $0(1)$, respectively, so that $ \langle \tau + |\xi|^2 \rangle \sim 1 
$ and $ \langle \xi \rangle \sim 1 $ on $G$. Consequently
$$
\| A^{-1/2} (u_N \overline{w}_N) \|_{X_{\pm}^{{\sigma},-b'}}
\gtrsim ( \int_G ( \langle \xi \rangle^{{\sigma}-1}
\widehat{u_N \overline{w}_N}(\xi,\tau) \langle |\xi|^2 + \tau \rangle^{-b'})^2 
d\xi d\tau)^{\frac{1}{2}} \gtrsim N^{-\frac{3}{2}} \, ,
$$
whereas
$$ \|u_N\|_{X^{s,b_1}} + \|w_N\|_{X^{s,b_2}} \lesssim N^{s-\frac{1}{2}} \, . $$
so that the proof is complete.
\end{proof}

The following Propositions \ref{Prop. 4}, \ref{Prop. 5} and \ref{Prop. 6} show 
that the flow map of our Cauchy problem is not $C^2$ so that the problem is 
ill-posed in this sense. This strategy of proof goes back to Bourgain \cite{B}, 
Tzvetkov \cite{T} and Molinet-Saut-Tzvetkov \cite{MST},\cite{MST1} and is taken 
up by Holmer \cite{H}. Thus the proof of Theorem \ref{Theorem 2} immediately 
follows from 
these propositions by the arguments of Holmer \cite{H}.
\begin{prop}
\label{Prop. 4}
Let $0 < T \le 1$. For any $N\gg 1$ there exists $u_N \in H^s({\mathbb R}^3)$ 
and a 
constant $c_0 > 0$, which is independent of $N$ such that
$$
\sup_{|t| \le T} \Bigl\| \int_0^t e^{-i(t-t')A^{1/2}} A^{-1/2}(e^{it'\Delta} 
u_N \overline{e^{it'\Delta}u_N}) dt' \Bigr\|_{H^{\sigma}({\mathbb R}^3)} \ge c_0 
N^{{\sigma}-2s-\frac{3}{2}} \|u_N\|^2_{H^s({\mathbb R}^3)}.
$$
\end{prop}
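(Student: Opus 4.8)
The plan is to construct an explicit test function $u_N$, compute the Duhamel-type wave integral directly in Fourier space, and extract a lower bound by localizing to a favourable frequency region. Following the counterexample philosophy of Holmer and \cite{BHHT}, I would take $\widehat{u}_N$ to be the characteristic function of a thin box in frequency space, chosen so that $e^{it'\Delta}u_N$ concentrates near a single spatial frequency $\xi \sim N$ with the Schr\"odinger phase $\tau = -|\xi|^2$ essentially frozen. Concretely I would set $\widehat{u}_N = \chi_E$ with $E$ a rectangle centred at $\xi_1 \sim N$, $\xi_2,\xi_3 = O(1)$, of dimensions $\tfrac{1}{N}\times 1 \times 1$, so that $\|u_N\|_{H^s} \sim N^s N^{-1/2}$ (the $N^{-1/2}$ coming from the $\tfrac{1}{N}$-width in $\xi_1$), hence $\|u_N\|_{H^s}^2 \sim N^{2s-1}$.

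The key computation is the bilinear interaction $e^{it'\Delta}u_N\,\overline{e^{it'\Delta}u_N}$. First I would write its spatial Fourier transform as a convolution, $\mathcal{F}_x(e^{it'\Delta}u_N\,\overline{e^{it'\Delta}u_N})(\xi) = \int \widehat{u}_N(\eta)\,\overline{\widehat{u}_N(\eta-\xi)}\,e^{it'(|\eta-\xi|^2 - |\eta|^2)}\,d\eta$, and observe that for $\xi$ near the origin (the low-frequency output region) the phase $|\eta-\xi|^2 - |\eta|^2 = -2\eta\cdot\xi + |\xi|^2$ is $O(1)$ over the whole $t'$-interval $[0,T]$, since $\eta \sim N$ but $\xi = O(1)$ only through the $\xi_1$-component which is of size $O(1/N)$. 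Thus the time integral $\int_0^t e^{-i(t-t')A^{1/2}}\,(\cdots)\,dt'$ suffers no cancellation: the factor $A^{-1/2}$ contributes $\langle\xi\rangle^{-1}\sim 1$ on the output box, and the $t'$-integration simply produces a factor of order $t$. The output is supported on a box $G$ near $\xi = 0$ of measure $O(1/N)$ (again from the $\xi_1$-direction), on which the amplitude is $\gtrsim N^{-1}\cdot N^{-1/2}\cdot\ldots$; integrating $\langle\xi\rangle^{\sigma}\sim 1$ against this over $G$ yields a lower bound of the stated shape. I would carry out the bookkeeping exactly as in Lemma \ref{Lemma 1}, where the analogous $X^{\sigma,-b'}$-lower bound $N^{-3/2}$ versus normalization $N^{s-1/2}$ already gives the ratio $N^{-2s-1/2}$; the present statement is the $H^\sigma$-at-fixed-time version, and matching powers gives the exponent $\sigma - 2s - \tfrac{3}{2}$ after dividing by $\|u_N\|_{H^s}^2 \sim N^{2s-1}$.

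The main obstacle, and the step requiring genuine care rather than routine estimation, is verifying that the \emph{non-resonance} is avoided, i.e.\ that the combined phase from the Schr\"odinger interaction and the free Klein-Gordon evolution $e^{-i(t-t')A^{1/2}}$ does not oscillate and cancel the integral over $t' \in [0,t]$. One must check that on the support of the interaction the wave frequency $A^{1/2} \sim \langle\xi\rangle \sim 1$ and the Schr\"odinger-difference phase are jointly $O(1)$ uniformly in $t'$, so that $\bigl|\int_0^t (\cdots)\,dt'\bigr| \gtrsim t$ rather than being damped. This is exactly the geometric alignment that the box dimensions are engineered to guarantee, and it is where the sharp threshold $\sigma - 2s = \tfrac{3}{2}$ originates; once this is secured, the remaining volume-counting and norm computations are direct. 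Finally I would note that $\sup_{|t|\le T}$ only helps (choosing $t$ comparable to $T$), and that the constant $c_0$ is manifestly $N$-independent since every estimate is a clean power count, completing the argument in parallel with the two-dimensional construction of \cite{BHHT}.
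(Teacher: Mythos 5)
Your construction has a genuine gap, and it is fatal for this particular proposition: with a single box $E$ centred at $\xi_1\sim N$, the product $e^{it'\Delta}u_N\,\overline{e^{it'\Delta}u_N}$ is supported (in $\xi$) where both $\eta\in E$ and $\eta-\xi\in E$, i.e.\ at \emph{low} output frequencies $\langle\xi\rangle\sim 1$. There the weight $\langle\xi\rangle^{\sigma}\sim 1$ contributes no power of $N$ at all, so your own bookkeeping --- amplitude $\gtrsim |t|\,N^{-1}$ on a set of measure $\sim N^{-1}$, with $A^{-1/2}$ harmless --- yields exactly $|t|\,N^{-3/2}$, hence after dividing by $\|u_N\|_{H^s}^2\sim N^{2s-1}$ the ratio $N^{-2s-\frac{1}{2}}$. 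That is precisely the content of Proposition \ref{Prop. 5} and Lemma \ref{Lemma 1} (the $s<-\frac{1}{4}$ obstruction), not of Proposition \ref{Prop. 4}: the exponents $\sigma-2s-\frac{3}{2}$ and $-2s-\frac{1}{2}$ agree only when $\sigma=1$, and your closing claim that ``matching powers gives $\sigma-2s-\frac{3}{2}$'' is not a computation your construction can support. A low-frequency output simply cannot see $\sigma$.

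To produce the $N^{\sigma}$ factor the output must sit at high frequency $\sim 2N$, which forces the two Schr\"odinger inputs to occupy nearly \emph{opposite} frequencies; and then the time integral is non-oscillatory only if the wave phase $\langle\xi\rangle\approx 2N$ cancels the Schr\"odinger phase difference $|\eta|^2-|\xi-\eta|^2$. The paper therefore takes $\widehat{u}_N=\chi_{D_1}+\chi_{D_2}$ with $D_1$ at $\xi_1\approx N+1$ and $D_2$ at $\xi_1\approx -N$, so that on the output region $\xi_1\approx 2N+1$ one has $\langle\xi\rangle-|\eta|^2+|\xi-\eta|^2=2N+1-(N+1)^2+N^2+O(1)=O(1)$; the shift by $1$ is tuned exactly to this wave--Schr\"odinger resonance, and it is this tuning (not the near-zero-output geometry you describe) that generates the threshold $\sigma-2s=\frac{3}{2}$. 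Once two boxes are used one must also verify, as the paper does, that the three remaining cross terms (the $D_2$--$D_1$, $D_1$--$D_1$ and $D_2$--$D_2$ interactions) have Fourier supports disjoint from the region $\xi_1\approx 2N+1$, so they cannot cancel the main term; this step has no counterpart in your argument and is not automatic. The correct count --- $A^{-1/2}$ costing $N^{-1}$ at output frequency $2N$, $\eta$-integration volume $N^{-1}$, $\xi$-measure $N^{-1}$, weight $\langle\xi\rangle^{\sigma}\sim N^{\sigma}$ --- gives $|t|\,N^{\sigma-\frac{5}{2}}$, i.e.\ the ratio $N^{\sigma-2s-\frac{3}{2}}$ after dividing by $N^{2s-1}$.
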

\begin{proof} Let $\widehat{u}_N := \chi_{D_1} + \chi_{D_2}$, where
\begin{eqnarray*}
D_1 & := & \{ \xi \in {\mathbb R}^3 : N+1-N^{-1} \le \xi_1 \le N+1+N^{-1} , -1 
\le 
\xi_2 , \xi_3 \le 1 \} \, ,\\
D_2 & := & \{ \xi \in {\mathbb R}^3 : -N-2N^{-1} \le \xi_1 \le -N+2N^{-1} , -2 
\le 
\xi_2 , \xi_3 \le 2 \} \, .
\end{eqnarray*}
In order to treat $\widehat{u_N \overline{u}_N}(\xi)$ one has to consider 4 
terms. We have
\begin{equation}
\label{***}
 \Bigl| {\mathcal F}_x ( \int_0^t e^{i(t-t')A^{1/2}} A^{-1/2} (e^{it'\Delta} 
\check{\chi}_{D_1} \overline{e^{it'\Delta} \check{\chi}_{D_2}}) dt') (\xi) 
\Bigr| 
 \end{equation}
$$
 \sim  \langle \xi \rangle ^{-1} \Bigl| \int_0^t \int_{\eta \in D_1,\eta - \xi 
\in D_2} e^{it'(\langle \xi \rangle - |\eta|^2 + |\xi - \eta|^2)} 
\chi_{D_1}(\eta) \chi_{D_2}(\eta - \xi) d \eta dt' \Bigr| \, .
$$
The inner integral vanishes unless
$$ \xi \in D':=\{2N+1-3N^{-1} \le \xi_1 \le 2N+1+3N^{-1} , -3 \le \xi_2,\xi_3 
\le 3 \} \, . $$
Now for the phase factor we have for such $\xi$:
$$ \langle \xi \rangle - |\eta|^2 + |\xi - \eta|^2 = 2N+1-(N+1)^2+N^2 + 0(1) = 
0(1) \, . $$
so that for $|t| \ll 1$ we get $|t'(\langle \xi \rangle - 
|\eta|^2+|\xi-\eta|^2)| \ll 1 $ and $ \langle \xi \rangle \sim 2N $, and 
therefore
$$ \int_0^t e^{it'(\langle \xi \rangle - |\eta|^2 + |\xi - \eta|^2)} dt' \sim t 
\, . $$
Moreover, if $\eta \in D_1$ and $\xi \in D$, where
$$ D:= \{ 2N+1-N^{-1} \le \xi_1 \le 2N+1+N^{-1} , -1 \le \xi_2,\xi_3 \le 1 \} 
\, $$
then automatically $\eta - \xi \in D_2$, so that for such $\xi$ the region of 
integration over $\eta$ is of size $|D_1| \sim N^{-1}$ . Thus for $\xi \in D$
we get:  $ \quad (\ref{***})  \gtrsim |t| N^{-2} \quad $ , so that integration 
over $\xi \in D$ with $|D| \sim N^{-1}$ gives
\begin{equation}
\label{****}
\Bigl\| \int_0^t e^{i(t-t')A^{1/2}} A^{-1/2} (e^{it'\Delta} \check{\chi}_{D_1} 
\overline{e^{it'\Delta} \check{\chi}_{D_2}}) dt')\Bigr\|_{H^{\sigma}({\mathbb 
R}^3)} 
\gtrsim |t| N^{{\sigma}-\frac{5}{2}} \, .
\end{equation}  
Next we treat the term where the roles of $D_1$ and $D_2$ are exchanged. It 
vanishes unless
$$ \xi \in \{ -2N-1-3N^{-1} \le \xi_1 \le -2N-1+3N^{-1} , -3 \le \xi_2,\xi_3 
\le 3 \} \, , $$
so that its support is disjoint to the support $D'$ in the previous case. 
Similarly the term coming from the product of $ e^{it'\Delta} \check 
{\chi}_{D_1} \overline{e^{it'\Delta}\check{\chi}_{D_1}} $ and $ e^{it'\Delta} 
\check {\chi}_{D_2} \overline{e^{it'\Delta}\check{\chi}_{D_2}} $ vanishes 
unless $\xi \in \{ -2N^{-1} \le \xi_1 \le 2N^{-1} , -2 \le \xi_2,\xi_3 \le 2 
\}$ and $\xi \in \{ -4N^{-1} \le \xi_1 \le 4N^{-1} , -4 \le \xi_2,\xi_3 \le 4 
\}$, respectively, so that these supports are also disjoint to $D'$. This 
implies
$$ \Bigl\| \int_0^t e^{-i(t-t')A^{1/2}} A^{-1/2}(e^{it'\Delta} u_N 
\overline{e^{it'\Delta}u_N}) dt' \Bigr\|_{H^{\sigma}({\mathbb R}^3)} \gtrsim |t| 
N^{{\sigma}-\frac{5}{2}} \, . $$
Combined with $ \|u_N\|_{H^s} \sim N^{s-\frac{1}{2}} $ this gives the claimed 
result\end{proof}
\begin{prop}
\label{Prop. 5}
Let $0<T \le 1$. For any $N \gg 1$ there exists $u_N \in H^s({\mathbb R}^3) $ 
such 
that
$$
\sup_{|t| \le T} \Bigl\| \int_0^t e^{-i(t-t')A^{1/2}} A^{-1/2}(e^{it'\Delta} 
u_N \overline{e^{it'\Delta}u_N}) dt' \Bigr\|_{H^l({\mathbb R}^3)} \ge c_0 
N^{-2s-\frac{1}{2}} \|u_N\|^2_{H^s({\mathbb R}^3)} $$
for any ${\sigma} \in {\mathbb R}$, where $c_0 > 0$ is independent of $N$.
\end{prop}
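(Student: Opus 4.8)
The plan is to produce the blow-up by exploiting the \emph{self-interaction} of a single high-frequency bump, arranging that the output of the Duhamel integral concentrates at \emph{low} spatial frequency $\xi = O(1)$. This is the crucial structural difference from Proposition \ref{Prop. 4}: there one isolates the cross term $e^{it'\Delta}\check\chi_{D_1}\overline{e^{it'\Delta}\check\chi_{D_2}}$, whose frequency sits near $2N$ and contributes $\langle\xi\rangle^{\sigma}\sim N^{\sigma}$ to the $H^{\sigma}$ norm; here the relevant interaction is diagonal, so $\langle\xi\rangle^{\sigma}\sim 1$ on the support of the output and the lower bound becomes independent of $\sigma$. Concretely, I would take $\widehat{u}_N := \chi_D$ with
$$D := \{\xi \in \mathbb{R}^3 : N \le \xi_1 \le N + N^{-1}, \ -1 \le \xi_2, \xi_3 \le 1\},$$
so that $|D|\sim N^{-1}$, $\langle\xi\rangle\sim N$ on $D$, and hence $\|u_N\|_{H^s}\sim N^{s-\frac12}$, i.e.\ $\|u_N\|_{H^s}^2\sim N^{2s-1}$.

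First I would record, exactly as in Proposition \ref{Prop. 4}, the identity
$$\mathcal{F}_x\Bigl(\int_0^t e^{-i(t-t')A^{1/2}} A^{-1/2}(e^{it'\Delta}u_N\,\overline{e^{it'\Delta}u_N})\,dt'\Bigr)(\xi) = \langle\xi\rangle^{-1} e^{-it\langle\xi\rangle}\int_0^t\!\!\int e^{it'\Phi(\eta,\xi)}\chi_D(\eta)\chi_D(\eta-\xi)\,d\eta\,dt',$$
with phase $\Phi(\eta,\xi) = \langle\xi\rangle - |\eta|^2 + |\eta-\xi|^2 = \langle\xi\rangle - 2\eta\cdot\xi + |\xi|^2$. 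The key observation is that on the low-frequency box
$$R := \{\xi : |\xi_1| \le \tfrac{1}{2}N^{-1}, \ |\xi_2|, |\xi_3| \le 1\}$$
the two quadratic Schr\"odinger phases cancel to leading order automatically: since $\eta_1\sim N$ while $\xi_1 = O(N^{-1})$ one has $|\eta\cdot\xi| = O(1)$, and with $\langle\xi\rangle\sim 1$ and $|\xi|^2 = O(1)$ this yields $|\Phi(\eta,\xi)| \le C$ for a constant $C$ independent of $N$. In contrast to Proposition \ref{Prop. 4}, no frequency offsets need to be tuned, because there is no large $N^2$ term to cancel.

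The main obstacle is then only to rule out cancellation in the double integral, which I would handle by choosing $|t|$ to be a small constant; this is legitimate since the claim takes the supremum over $|t|\le T$. Writing $e^{it'\Phi} = 1 + (e^{it'\Phi}-1)$ with $|e^{it'\Phi}-1| \le C|t'|$, the inner double integral equals $t\,|D\cap(D+\xi)|\,(1 + O(C|t|))$, so for $|t|\le\min(T,\tfrac{1}{2C})$ its modulus is $\ge \tfrac12|t|\,|D\cap(D+\xi)|$. For $\xi\in R$ the box overlap obeys $|D\cap(D+\xi)|\gtrsim N^{-1}$, whence, using $\langle\xi\rangle^{-1}\sim 1$ on $R$,
$$\Bigl|\mathcal{F}_x\Bigl(\int_0^t\cdots\Bigr)(\xi)\Bigr| \gtrsim |t|\,N^{-1}, \qquad \xi \in R.$$

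Finally I would integrate this pointwise bound over $R$. Since $\langle\xi\rangle^{\sigma}\sim 1$ on $R$ (with implied constant depending only on $\sigma$) and $|R|\sim N^{-1}$,
$$\Bigl\|\int_0^t\cdots\Bigr\|_{H^\sigma}^2 \gtrsim |R|\,(|t|N^{-1})^2 \sim t^2 N^{-3},$$
so the norm is $\gtrsim |t|\,N^{-3/2}$, uniformly in $\sigma$. Dividing by $\|u_N\|_{H^s}^2\sim N^{2s-1}$ and fixing $|t| = \min(T,\tfrac{1}{2C})$ gives
$$\frac{\bigl\|\int_0^t\cdots\bigr\|_{H^\sigma}}{\|u_N\|_{H^s}^2} \gtrsim |t|\,N^{-\frac32 - (2s-1)} = |t|\,N^{-2s-\frac12},$$
which is the asserted bound with $c_0 = c_0(\sigma,T)$ independent of $N$. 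The high-frequency cross terms (supported near $\xi\sim\pm 2N$) have frequency support disjoint from $R$, already exploited in Proposition \ref{Prop. 4}, so they cannot interfere with this low-frequency computation and a single bump suffices.
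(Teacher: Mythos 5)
Your proof is correct and follows essentially the same route as the paper's: the paper likewise places the input at frequency $\sim N$ so that the relevant output of the Duhamel term sits at low frequency $\xi = O(1)$ where the phase $\langle\xi\rangle - |\eta|^2 + |\eta-\xi|^2$ is $O(1)$, takes $|t|$ a small constant so the time integral is $\sim t$, and compares the resulting bound $\gtrsim |t|\,N^{-3/2}$ with $\|u_N\|_{H^s}^2 \sim N^{2s-1}$; the only difference is that the paper uses two boxes $D_1,D_2$ near frequency $N$ offset by $O(N^{-1})$ and isolates the $D_1$--$D_2$ cross term (then checks the other three product terms have frequency supports disjoint from $D'$), whereas your single box makes the integrand $\chi_D(\eta)\chi_D(\eta-\xi)$ manifestly nonnegative and renders that bookkeeping unnecessary. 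One harmless slip: your closing sentence about cross terms near $\pm 2N$ is vacuous in your setup, since with a single bump at $+N$ the transform $\widehat{u_N\overline{u_N}}$ is supported only near frequency $0$ (outputs near $\pm 2N$ occur for bumps at $\pm N$ as in Proposition \ref{Prop. 4}, not here), but nothing in your argument depends on it.
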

\begin{proof} Let $\widehat{u}_N := \chi_{D_1} + \chi_{D_2}$, where
\begin{eqnarray*}
D_1 & := & \{ \xi \in {\mathbb R}^3 : N-N^{-1} \le \xi_1 \le N+N^{-1} , -1 \le 
\xi_2 , \xi_3 \le 1 \} \, ,\\
D_2 & := & \{ \xi \in {\mathbb R}^3 : N+4N^{-1} \le \xi_1 \le N+7N^{-1} , -2 \le 
\xi_2 , \xi_3 \le 2 \} \, .
\end{eqnarray*}
We first consider the term (\ref{***}). This vanishes unless
$\xi \in D'$, where 
$$ D' := \{ -8N^{-1} \le \xi_1 \le -3N^{-1} , -3 \le \xi_2,\xi_3 \le 3 \}  \, , 
$$ so that for $\xi \in D'$ we have $| \langle \xi \rangle - |\eta|^2 + |\xi - 
\eta|^2| \lesssim 1 $ and $ \langle \xi \rangle \sim 1$, thus for $|t| \ll 1$ :
$$ \int_0^t e^{it'(\langle \xi \rangle - |\eta|^2 + |\xi - \eta|^2)} dt' \sim t 
\, . $$
Moreover, if $\eta \in D_1$ and $\xi \in D$, then automatically $\eta - \xi \in 
D_2$, where 
$$D:= \{ -6N^{-1} \le \xi_1 \le -5N^{-1} , -1 \le \xi_2,\xi_3 \le 1 \} \, , $$ 
so that for such $\xi$ the region for the integration over $\eta$ is of size 
$|D_1| \sim N^{-1}$, which implies the lower bound: 
\begin{equation}
 \Bigl| {\mathcal F}_x ( \int_0^t e^{i(t-t')A^{1/2}} A^{-1/2} (e^{it'\Delta} 
\check{\chi}_{D_1} \overline{e^{it'\Delta} \check{\chi}_{D_2}}) dt') (\xi) 
\Bigr|  
 \gtrsim  |t| N^{-1}  \, , 
\end{equation} 
 so that integration over $\xi \in D$ with $|D| \sim N^{-1}$ and $ \langle \xi 
\rangle \sim 1 $ gives:
$$
\Bigl\| \int_0^t e^{i(t-t')A^{1/2}} A^{-1/2} (e^{it'\Delta} \check{\chi}_{D_1} 
\overline{e^{it'\Delta} \check{\chi}_{D_2}}) dt')\Bigr\|_{H^l({\mathbb R}^3)} 
\gtrsim |t| N^{-\frac{3}{2}} \, .
$$
Next we consider the integrals
$$ \int e^{it'(\langle \xi \rangle - |\eta|^2 
+ |\xi - \eta|^2)} \chi_{D_j}(\eta) \chi_{D_m}(\eta - \xi) d \eta $$
for the other combinations of $ j,m \in \{1,2\} $. This term vanishes unless
\begin{itemize}
\item in the case $j=2,m=1$: $\, \xi \in \{ 3N^{-1} \le \xi_1 \le 8N^{-1} , -3 
\le \xi_2,\xi_3 \le 3 \} \, , $
\item in the case $j=m=1$: $\, \xi \in \{ -2N^{-1} \le \xi_1 \le 2N^{-1} , -2 
\le \xi_2,\xi_3 \le 2 \} \, , $
\item in the case $j=m=2$: $\, \xi \in \{ -3N^{-1} \le \xi_1 \le 3N^{-1} , -4 
\le \xi_2,\xi_3 \le 4 \} \, . $
\end{itemize}
In all the cases these sets are disjoint to $D'$, so that we conclude
$$ \Bigl\| \int_0^t e^{-i(t-t')A^{1/2}} A^{-1/2}(e^{it'\Delta} u_N 
\overline{e^{it'\Delta}u_N}) dt' \Bigr\|_{H^{\sigma}({\mathbb R}^3)} \ge c_0 t 
N^{-\frac{3}{2}} \, . $$
Combined with $ \|u_N\|_{H^s} \sim N^{s-\frac{1}{2}} $ this gives the claimed 
result.
\end{proof}
\begin{prop}
\label{Prop. 6}
Let $0<T\le 1$. For all $ N \gg T^{-1}$ there exists $u_N \in H^s({\mathbb 
R}^3)$ 
and $v_N \in H^{\sigma}({\mathbb R}^3)$ such that
\begin{eqnarray*}
\lefteqn{\sup_{|t| \le T} \Bigl\| \int_0^t e^{-i(t-t')\Delta} (e^{it'\Delta} 
u_N Re(e^{-it'A^{1/2}}v_N)) dt' \Bigr\|_{H^s({\mathbb R}^3)} }\\
& & \ge c_0 N^{-{\sigma}-\frac{1}{2}} \|u_N\|_{H^s({\mathbb R}^3)} 
\|v_N\|_{H^{\sigma}({\bf 
R}^3)} 
\end{eqnarray*}
for any $s \in {\mathbb R}$ , where $c_0 > 0$ is independent of $N$.
\end{prop}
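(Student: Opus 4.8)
The plan is to follow the Holmer--BHHT scheme already used for Propositions \ref{Prop. 4} and \ref{Prop. 5}, constructing explicit frequency-localized data whose second Picard iterate is abnormally large. Concretely, I would take $\widehat{u}_N=\chi_E$ and $\widehat{v}_N=\chi_F$, where $F$ is a thin box centered at frequency $\sim N e_1$ (so $\langle\zeta\rangle\sim N$ on $F$) and $E$ is a thin box centered near $-\tfrac{N}{2}e_1$, both of width $\delta$ in the $e_1$-direction and width $\sim 1$ in the remaining two directions. The shift by the factor $-\tfrac12$ is dictated by the resonance analysis: writing the Schr\"odinger frequency as $\eta$ and the wave frequency as $\zeta=\xi-\eta$, the phase governing the time integral is (up to the sign conventions used in the proof of Proposition \ref{Prop. 4}) $R=|\xi|^2-|\eta|^2\pm\langle\zeta\rangle=2\eta\cdot\zeta+|\zeta|^2\pm\langle\zeta\rangle$, and for $\zeta\sim N$ this is $O(1)$ exactly when $\eta\cdot\hat\zeta\approx-\tfrac{|\zeta|}{2}$, i.e. on the ``half--half'' set $\eta\approx-\zeta/2$, $\xi=\eta+\zeta\approx\zeta/2$. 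Thus all three frequencies $\eta,\xi,\zeta$ sit at size $\sim N$.

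Next I would expand $\mathrm{Re}(e^{-it'A^{1/2}}v_N)=\tfrac12\bigl(e^{-it'A^{1/2}}v_N+\overline{e^{-it'A^{1/2}}v_N}\bigr)$ into its two conjugate pieces and compute the spatial Fourier transform of the Duhamel term. Up to a unimodular prefactor the resonant piece produces $\int_0^t e^{it'R}\,\chi_E(\eta)\,\chi_F(\xi-\eta)\,d\eta\,dt'$. The companion piece has wave frequency in $-F$ (centered near $-Ne_1$), so with the same $E$ its output frequency $\xi=\eta+\zeta$ is centered near $-\tfrac{3N}{2}e_1$ and its phase is $\sim N^2$; hence it is both non-resonant and supported on an output region disjoint from the one we exploit, exactly as in the support-disjointness arguments of Propositions \ref{Prop. 4} and \ref{Prop. 5}. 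On the output set $D$ (centered near $\tfrac{N}{2}e_1$, of $e_1$-width $\sim\delta$ and measure $\sim\delta$) the inner $\eta$-integral has size $\sim\delta$, and, having arranged $|R|\lesssim 1/T$ there, the time integral is $\sim t$. This yields the pointwise lower bound $|\mathcal F_x(\text{Duhamel})(\xi)|\gtrsim |t|\,\delta$ on $D$.

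Finally I would assemble the norms. Since $\langle\xi\rangle\sim N$ on $D$, integration over $D$ gives $\|\cdot\|_{H^s}\gtrsim |t|\,N^{s}\,\delta^{3/2}$, while $\|u_N\|_{H^s}\sim N^{s}\delta^{1/2}$ and $\|v_N\|_{H^{\sigma}}\sim N^{\sigma}\delta^{1/2}$, so the ratio is $\sim |t|\,\delta^{1/2}N^{-\sigma}$. Choosing $t=T$ and $\delta\sim (NT)^{-1}$ — which is admissible precisely because $N\gg T^{-1}$ forces $\delta\ll 1$, so the boxes are thin and well separated and the phase stays $\lesssim 1/T$ across them — the ratio becomes $\sim T^{1/2}N^{-\sigma-\frac12}$, the claimed bound with a constant $c_0\sim T^{1/2}$ independent of $N$. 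The main obstacle, and the place where the construction differs essentially from the purely Schr\"odinger resonance of Proposition \ref{Prop. 5}, is that coupling a high-frequency Klein--Gordon wave to the Schr\"odinger evolution cannot be made resonant through a near-difference of two comparable frequencies: it forces the half--half split $\eta\approx-\zeta/2$. The delicate points are then to verify that the overlap $\chi_E(\eta)\,\chi_F(\xi-\eta)$ and the output set $D$ carry the advertised measure $\sim\delta$, that $R=O(1/T)$ survives across the whole box (which fixes $\delta\sim (NT)^{-1}$ and accounts for the hypothesis $N\gg T^{-1}$), and that the second conjugate term is genuinely eliminated by support disjointness.
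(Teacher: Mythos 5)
Your construction is correct and is essentially the paper's own proof: the paper disposes of Proposition \ref{Prop. 6} by citing \cite[Prop. 6.5]{BHHT} ``with obvious modifications,'' and what you have written out is exactly that half--half resonant box construction transplanted to 3D ($\widehat{v}_N$ on a box at frequency $Ne_1$, $\widehat{u}_N$ on a box near $-\tfrac{N}{2}e_1$, $e_1$-width $\delta\sim(NT)^{-1}$, perpendicular widths $O(1)$, the conjugate half of $\mathrm{Re}(e^{-it'A^{1/2}}v_N)$ eliminated by support disjointness near $-\tfrac{3N}{2}e_1$), with the norm accounting $t\,\delta^{1/2}N^{-\sigma}\sim T^{1/2}N^{-\sigma-\frac12}$ coming out right. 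The one detail worth making explicit when you write it up: since $R=\zeta_1(2\eta_1+\zeta_1\pm 1)+O(1)$, setting $\eta\cdot\hat\zeta\approx-\tfrac{|\zeta|}{2}$ alone leaves $R\approx\pm\langle\zeta\rangle\sim N$, so the center of $E$ must carry the additional unit shift $\eta_1^0=-\tfrac{N}{2}\mp\tfrac12$ to achieve $|R|\lesssim 1/T$ across the boxes --- an adjustment your phrase ``centered near $-\tfrac{N}{2}e_1$'' and your stated requirement that $R=O(1/T)$ survive over the whole box already accommodate.
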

\begin{proof} The proof of \cite[Prop. 6.5]{BHHT} in two dimensions is 
also true 
in the three dimensional case with obvious modifications.
\end{proof}

\end{document}